
\documentclass[12pt]{amsart}

\usepackage{amssymb}
\usepackage{graphicx}
\usepackage{enumerate}
\usepackage{multirow}
\usepackage{amsmath,color}
\usepackage{hyperref}
\usepackage{url}
\usepackage[colorinlistoftodos]{todonotes}

\newtheorem{thm}{Theorem}[section]
\newtheorem{cor}[thm]{Corollary}
\newtheorem{lem}[thm]{Lemma}
\newtheorem{prop}[thm]{Proposition}

\numberwithin{equation}{section}
\theoremstyle{definition}

\def\Cay{\mathop{\rm Cay }\nolimits}
\def\Dih{\mathop{\rm Dih }\nolimits}
\newcommand{\seq}[1]{\langle #1\rangle}

\title[Bipartite distance-regular Cayley graphs]{On bipartite distance-regular Cayley graphs with small diameter}

\author{Edwin R. van Dam}
\address{Department of Econometrics and O.R., Tilburg University, The Netherlands}
\email{Edwin.vanDam@uvt.nl}
\author{Mojtaba Jazaeri}
\address{Department of Mathematics, Shahid Chamran University of Ahvaz, Ahvaz, Iran}
\address{School of Mathematics, Institute for Research in Fundamental Sciences (IPM), P.O. Box: 19395-5746, Tehran, Iran}
\email{M.Jazaeri@scu.ac.ir, M.Jazaeri@ipm.ir}
\begin{document}


\subjclass[2010]{05B10, 05E30}

\keywords{Cayley graph; Bipartite distance-regular graph; Symmetric design; Difference set; Partial geometric difference set; Relative difference set}

\begin{abstract}
We study bipartite distance-regular Cayley graphs with diameter three or four. We give sufficient conditions under which a bipartite Cayley graph can be constructed on the semidirect product of a group --- the part of this bipartite Cayley graph which contains the identity element --- and $\mathbb{Z}_{2}$. We apply this to the case of bipartite distance-regular Cayley graphs with diameter three, and consider cases where the sufficient conditions are not satisfied for some specific groups such as the dihedral group.
We also extend a result by Miklavi\v{c} and Poto\v{c}nik that relates difference sets to bipartite distance-regular Cayley graphs with diameter three to the case of diameter four. This new case involves certain partial geometric difference sets and --- in the antipodal case --- relative difference sets.

\end{abstract}

\maketitle

\section{Introduction}

We study bipartite distance-regular Cayley graphs with diameter three or four.
The subjects of both distance-regular graphs and Cayley graphs form important areas in algebraic graph theory. For background (and more) on distance-regular graphs, we refer to the monograph \cite{BCN}, survey \cite{DKT}, and website \cite{drgorg}. The question which distance-regular graphs are Cayley graphs is a problem which has received increasing attention recently (see \cite[problem~70]{DKT}). Miklavi\v{c} and Poto\v{c}nik \cite{MP2} for example classified distance-regular Cayley graphs on dihedral groups. It turned out that every non-trivial distance-regular Cayley graphs on a dihedral group is bipartite with diameter three. This gives rise to the question which bipartite distance-regular graphs with diameter three are Cayley graphs. Is it the case that all such distance-regular Cayley graphs can be realized on the semidirect product of a group and $\mathbb{Z}_{2}$? We note that, in general, for a given bipartite graph, it is an NP-complete problem to decide if it has an automorphism of order $2$ which interchanges the two parts; see \cite[p.~106]{Ba}.

In this paper, we give sufficient conditions under which a bipartite Cayley graph can be constructed on the semidirect product of a group --- the part of this bipartite Cayley graph which contains the identity element --- and $\mathbb{Z}_{2}$. We apply this to the case of bipartite distance-regular Cayley graphs with diameter three, and consider cases where the sufficient conditions are not satisfied for some specific groups such as the dihedral group.

The development of a difference set is well known to be a symmetric design. In turn, the incidence graph of a symmetric design is a bipartite distance-regular graph with diameter three. Miklavi\v{c} and Poto\v{c}nik \cite{MP2} made precise when this construction from a difference set leads to a Cayley graph. Here we extend this to the case of diameter four. This new case involves certain partial geometric difference sets and --- in the antipodal case --- relative difference sets.

We note that Chen and Li \cite{CL} studied relative difference sets in relation to antipodal distance-regular graphs with diameter three. In this case the connection set of the Cayley graph is the relative difference set, just like partial difference sets are connection sets of strongly regular graphs (i.e., distance-regular graphs with diameter two).

The paper is further organized as follows. In Section \ref{Sec:drCgandDS}, we introduce notation and the relation between distance-regular graphs with diameter three and difference sets. We recall the above mentioned correspondence between difference sets and Cayley graphs by Miklavi\v{c} and Poto\v{c}nik \cite{MP2} in Proposition \ref{Bipartite distance-regular and difference set}. We add to this an observation about constructing bipartite Cayley graphs from sets in an abelian group in Proposition \ref{lemma:devCayley}.

In Section \ref{sec:bipartiteCayley}, we consider bipartite Cayley graphs. We give conditions on the order and size such that a bipartite Cayley graph can be constructed on the semidirect product of a group --- the part of this bipartite Cayley graph which contains the identity element --- and $\mathbb{Z}_{2}$. In Proposition \ref{zeroeigenvalue}, we extend this result to the case when the graph has no eigenvalue $0$. Before applying all this in Section \ref{sec:d3}, we first introduce certain partial geometric difference sets in Section \ref{PGDS}, as these are relevant both for diameters three and four. In Proposition \ref{prop:0mupgds}, we extend the result by Miklavi\v{c} and Poto\v{c}nik \cite{MP2} by relating certain partial geometric difference sets to bipartite distance-regular Cayley graphs with diameter four. We apply this to some examples before zooming in on the antipodal bipartite case in Proposition \ref{propRelDifSet}. Also here we mention some interesting examples.

In Section \ref{sec:d3}, we then apply the results of Section \ref{sec:bipartiteCayley} to the case of bipartite distance-regular Cayley graphs with diameter three, and consider some specific groups such as the dihedral group.

In the final section, we mention some (mostly known) results on bipartite distance-regular Cayley graphs of larger diameter.

\section{Distance-regular Cayley graphs and difference sets}\label{Sec:drCgandDS}

\subsection{Preliminaries}

Let $\Gamma$ be a connected graph with diameter $d$. Then $\Gamma$ is called distance-regular with intersection array $\{b_{0},b_{1},\ldots,b_{d-1};\break c_{1},c_{2},\ldots,c_{d}\}$ whenever, for each pair of vertices $x$ and $y$ at distance $i$, where $i=0,1,\ldots,d$, the number of neighbours of $x$ at distance $i+1$ and $i-1$ from $y$ are constant numbers $b_{i}$ and $c_{i}$, respectively. This implies that a distance-regular graph is regular with valency $b_{0}=k$ and that the number of vertices at distance $i$ from a fixed vertex is constant. This number is denoted by $k_{i}$ and it follows that $k_{i+1}=\frac{k_{i}b_{i}}{c_{i+1}}$, where $i=0,1,\ldots,d-1$. Also the number of neighbours of $x$ at distance $i$ from $y$ is a constant number $k-b_{i}-c_{i}$, which is denoted by $a_{i}$. If $\Gamma$ is bipartite, then $a_i=0$ for  $i=0,1,\ldots,d$.

Let $G$ be a finite group and $S$ be an inverse-closed subset of $G$ not containing the identity element; we call $S$ the connection set. Then the Cayley graph $\Cay(G,S)$ is the graph whose vertex set is $G$, where two vertices $a$ and $b$ are adjacent (denoted by $a \sim b$) whenever $ab^{-1} \in S$. By $S_i$ we denote the set of elements at distance $i$ in $\Cay(G,S)$ from the identity element. Recall that a graph $\Gamma$ is a Cayley graph if and only if there exists a subgroup of the automorphism group of $\Gamma$ which acts regularly on the vertex set of $\Gamma$ (see e.g.,~\cite[Lemma~16.3]{B}). In this paper, the identity element of a group $G$ is denoted by $e$ and its order by $|G|$.

A semidirect product of a group $H$ with a group $K$, which is denoted by $H \rtimes K$ or $K \ltimes H$, is a (not necessarily unique) group $G$ containing a normal subgroup $H_{1}$ which is isomorphic to $H$ and a subgroup $K_{1}$ isomorphic to $K$ such that $G=H_{1}K_{1}$ and $H_{1} \cap K_{1}=\{e\}$.

\subsection{Symmetric designs and difference sets}

A $2$-$(n,k,\mu)$ design consists of a finite set of order $n$ (of elements called points) and a family of $k$-element subsets (with $k \geq 2$) of this set (called blocks) such that each pair of points is included in exactly $\mu$ blocks. Moreover, this design is called symmetric whenever the numbers of points and blocks are equal. The incidence graph of a symmetric $2$-$(n,k,\mu)$ design is a bipartite graph with two parts of points and blocks such that a point is adjacent to a block whenever the point lies in the block. It is known that this incidence graph is a bipartite distance-regular graph with diameter three, and the other way around.

Indeed, let $\Gamma$ be a bipartite distance-regular graph with diameter $3$. Then its intersection array is $\{k,k-1,k-\mu;1,\mu,k\}$, where $\mu = c_{2}$; see also the distance-distribution diagram below. It is well known that the graph $\Gamma$ is the incidence graph of a symmetric $2$-$(n,k,\mu)$ design, where $2n$ is the number of vertices of this graph (see e.g.,~\cite[Thm.~5.10.3]{Go}). Note that if such a design exists, then $k(k-1)=(n-1)\mu$, for example because $n-1=k_2=\frac{k(k-1)}{\mu}$. If $k<n-1$ (in which case we call the graph non-trivial), then the distance-$3$ graph $\Gamma_{3}$ is also a bipartite distance-regular graph with intersection array $\{n-k,n-k-1,k-\mu;1,n-2k+\mu,n-k\}$, which is the incidence graph of the so-called complementary symmetric $2$-$(n,n-k,n-2k+\mu)$ design. Furthermore, $\{k^{1},\sqrt{k-\mu}^{[n-1]},-\sqrt{k-\mu}^{[n-1]},-k^{1}\}$ is the spectrum (the multiset of all eigenvalues of the adjacency matrix) of this graph.
\begin{center}
\begin{figure}[h]

 \centering
 \centerline{\includegraphics*[height=2cm]{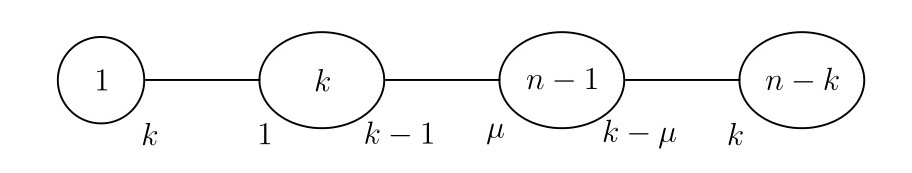}}
 \label{diagram}
 \caption{Distance-distribution diagram}
\end{figure}
\end{center}

Trivial examples in the context of this paper are the complete bipartite graph $K_{n,n}$ minus a perfect matching. These are the incidence graphs of the (trivial) $2$-$(n,n-1,n-2)$ designs; or the bipartite distance-regular graphs with intersection array $\{k,k-1,1;1,k-1,k\}$, i.e., those with $k=n-1$, or equivalently $\mu=k-1$. Such graphs are Cayley graphs on the dihedral group $D_{2n}$ (see e.g.,~\cite[\S3.1]{VJ}).

An $(n,k,\mu)$-difference set in a finite group $H$, where $|H|=n$, is a subset $D \subseteq H$ of size $k$ such that every non-identity element of $H$ can be expressed exactly $\mu$ times as (a ``multiplicative difference") $d_{1}d_{2}^{-1}$, where $d_{1}$, $d_{2}$ are elements of $D$. Moreover, from an $(n,k,\mu)$-difference set $D$ in a finite group $H$, one can construct a symmetric $2$-$(n,k,\mu)$ design called the development of $D$, by considering $\{Dh \mid h \in H\}$ as the set of blocks. We note that if $D$ is a difference set, then $H \setminus D$ is also a difference set, and its development is the complementary design of the development of $D$. Moreover, $D^{-1}$ is also a difference set (even when $H$ is nonabelian), because its development gives the dual design (i.e., the design obtained by interchanging the role of points and blocks). An $(n,k,\mu)$-difference set $D$ is called trivial whenever $k \in \{0,1,n-1,n\}$.

We recall the following result by Miklavi\v{c} and Poto\v{c}nik \cite{MP2} that relates bipartite distance-regular Cayley graphs with diameter $3$ and certain difference sets.

\begin{prop} \cite[Lemma~2.8]{MP2} \label{Bipartite distance-regular and difference set}
Let $G$ be a group of order $2n$ and $S$ a subset of $G$ of size $k$. Then the following statements are equivalent:\footnote{In the formulation in \cite{MP2} we replaced $D=a^{-1}S$ by $D=Sa^{-1}$ because of our slightly different definition of Cayley graphs. The original formulation is also valid with our definition, but it is less natural.}
\begin{itemize}
\item $S \subseteq G \setminus \{e\}$, $S = S^{-1}$ and $\Cay(G,S)$ is a non-trivial bipartite distance-regular graph with diameter $3$ and intersection array $\{k,k-1,k-\mu;1,\mu,k\}$;
\item there is a subgroup $H$ of index $2$ in $G$ and an element $a \in G \setminus H$ such that the set $D=Sa^{-1}$ is a non-trivial $(n,k,\mu)$-difference set in $H$ satisfying $D^{-1}=aDa$;
\item there is a subgroup $H$ of index $2$ in $G$ such that for every $a \in G \setminus H$, the set $D=Sa^{-1}$ is a non-trivial $(n,k,\mu)$-difference set in $H$ satisfying $D^{-1}=aDa$.
\end{itemize}
\end{prop}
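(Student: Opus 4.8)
The plan is to establish the cycle of implications $(2)\Rightarrow(1)\Rightarrow(3)\Rightarrow(2)$. The last implication is immediate, since a subgroup of index $2$ has nonempty complement, so the ``for every $a$'' statement in $(3)$ specialises to the ``there is an $a$'' statement in $(2)$. The engine behind the other two implications is a single dictionary: for any subgroup $H\leq G$ of index $2$, any $a\in G\setminus H$, and any $S\subseteq G\setminus H$, the graph $\Cay(G,S)$ is isomorphic to the incidence graph of the development of $D:=Sa^{-1}\subseteq H$. To see this, write each vertex of the coset $G\setminus H$ uniquely as $ma$ with $m\in H$; then $h\sim ma$ in $\Cay(G,S)$ iff $ha^{-1}m^{-1}a^{-1}\in D$, and, using that $\tau\colon g\mapsto a^{-1}ga$ is an automorphism of $H$ (as $H\trianglelefteq G$), this rewrites as $h\in Da^{2}\tau(m)$. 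Since $m\mapsto a^{2}\tau(m)$ permutes $H$, the neighbourhoods in $H$ of the vertices of $G\setminus H$ run exactly through $\{Dc:c\in H\}$, which is the block set of $\mathrm{dev}(D)$; hence $\Cay(G,S)$ is the incidence graph of $\mathrm{dev}(D)$, the block side being relabelled by $ma\leftrightarrow Da^{2}\tau(m)$.

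For $(2)\Rightarrow(1)$: from $D^{-1}=aDa$ we get $S^{-1}=a^{-1}D^{-1}=Da=S$, while $S=Da\subseteq G\setminus H$ gives $e\notin S$; a non-trivial difference set has $\mu\geq 1$, so every non-identity element of $H$ lies in $DD^{-1}$, whence $\langle S\rangle=G$ and $\Cay(G,S)$ is connected. By the dictionary it is the incidence graph of the symmetric $2$-$(n,k,\mu)$ design $\mathrm{dev}(D)$, so by the quoted fact (\cite[Thm.~5.10.3]{Go}) it is a non-trivial bipartite distance-regular graph of diameter $3$ with intersection array $\{k,k-1,k-\mu;1,\mu,k\}$, which is $(1)$.

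For $(1)\Rightarrow(3)$: the colour class of $e$ is the subgroup $H=\langle SS\rangle$ of elements that are products of an even number of elements of $S$, and $S\subseteq G\setminus H$ forces $[G:H]=2$. Fix any $a\in G\setminus H$ and set $D=Sa^{-1}$; the relation $S=S^{-1}$ is equivalent to $D^{-1}=aDa$ by the same one-line computation as above. By the dictionary, the incidence structure $(H,\{Dc:c\in H\})$ has $\Cay(G,S)$ as incidence graph; and since in a bipartite distance-regular graph with this intersection array any two vertices of one colour class lie at distance $2$ and so have exactly $c_{2}=\mu$ common neighbours, this structure is a symmetric $2$-$(n,k,\mu)$ design. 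A short count shows that the number of blocks $Dc$ containing two given points $x,y\in H$ equals the number of ways of writing $yx^{-1}$ as a difference $d_{2}d_{1}^{-1}$ with $d_{i}\in D$, so the $2$-design property says precisely that $D$ is an $(n,k,\mu)$-difference set, non-trivial since $2\leq k\leq n-2$. As $a\in G\setminus H$ was arbitrary, $(3)$ follows.

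The only genuine obstacle is setting up the dictionary with the side conventions exactly right: the conjugation $\tau$ by $a$ is absorbed into a relabelling of the block side, and it is precisely this twist that turns the symmetry $S=S^{-1}$ into the condition $D^{-1}=aDa$ rather than $D^{-1}=D$. Everything else reduces to the quoted correspondence between symmetric designs and bipartite distance-regular graphs of diameter $3$, together with elementary counting.
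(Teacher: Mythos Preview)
Your proof is correct. Note, however, that the paper does not actually prove this proposition: it is quoted verbatim from Miklavi\v{c} and Poto\v{c}nik \cite[Lemma~2.8]{MP2}, so there is no in-paper argument to compare against directly.

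That said, the paper does prove the diameter-$4$ analogue (Proposition~\ref{prop:0mupgds}), and it is instructive to compare. There the cycle runs $(1)\Rightarrow(2)\Rightarrow(3)\Rightarrow(1)$: the implication $(1)\Rightarrow(2)$ is handled by group-ring computations ($D^{-1}D=aS^2a^{-1}=k\{e\}+\mu\,aS_2a^{-1}$, etc.), and only the converse $(3)\Rightarrow(1)$ invokes the explicit graph isomorphism $\varphi(h)=h$, $\varphi(a^{-1}h)=Dh$ between $\Cay(G,S)$ and the incidence graph of $\mathrm{dev}(D)$. You instead set up the ``dictionary'' isomorphism once and use it for both non-trivial implications, running the cycle in the opposite direction $(2)\Rightarrow(1)\Rightarrow(3)\Rightarrow(2)$. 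Your route is a bit more conceptual (one geometric identification does all the work, and the twist $D^{-1}=aDa$ is explained as the image of $S=S^{-1}$ under the relabelling), while the paper's group-ring calculation is shorter to write down and generalises mechanically to the more intricate counting needed in the diameter-$4$ case. Both arguments ultimately rest on the same correspondence between symmetric $2$-designs and bipartite distance-regular graphs of diameter~$3$.
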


For abelian groups, we can be a bit more specific (and constructive).

\begin{prop}\label{lemma:devCayley}
Let $D$ be a subset (not necessarily a difference set) of an abelian group $H$. Then the incidence graph of its development is isomorphic to the Cayley graph on the generalized dihedral group $G=\Dih(H)=H \rtimes \mathbb{Z}_{2}$ with connection set $S=Dc$, where $c^2=1$ ($c\notin H$) and $chc=h^{-1}$ for every element $h\in H$.
\end{prop}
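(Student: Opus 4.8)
The plan is to write down an explicit graph isomorphism from the incidence graph of the development of $D$ onto $\Cay(G,S)$, after first checking that the latter is a well-defined Cayley graph. Represent $G=\Dih(H)$ as $H\cup Hc$ with $c^{2}=e$ and $ch=h^{-1}c$ for all $h\in H$ (this is where the abelian hypothesis on $H$ is used, so that inversion is an automorphism and $G$ is a group of order $2n$). A short computation gives $(hc)(hc)=h(ch)c=hh^{-1}c^{2}=e$, so every element of the coset $Hc$ is an involution; in particular $S=Dc\subseteq Hc$ satisfies $S=S^{-1}$ and $e\notin S$, so $\Cay(G,S)$ is a genuine Cayley graph. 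Since $H$ has index $2$ in $G$ and $S$ is contained in the non-trivial coset $Hc$, the graph $\Cay(G,S)$ is bipartite with parts $H$ and $Hc$.

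Next I would fix the bipartite description of the incidence graph: its vertex set is a copy of $H$ (the points) together with a second copy of $H$ (the blocks, the block labelled $h$ being $Dh$), and the point $x$ is adjacent to the block $h$ precisely when $x\in Dh$. (When $D$ is a difference set the sets $Dh$ are distinct and this is the usual incidence graph of the symmetric design; for a general subset $D$ one simply keeps the blocks indexed by $H$.) Define $\phi$ by $\phi(x)=x$ for a point $x\in H$ and $\phi(h)=h^{-1}c$ for a block $h\in H$; this is a bijection onto $G$ carrying the point class onto $H$ and the block class onto $Hc$. To see it is an isomorphism it suffices, as neither graph has edges within a vertex class, to check point--block pairs: for $x\in H$ and a block $h$, using that $h^{-1}c$ is an involution we get $\phi(x)\phi(h)^{-1}=x(h^{-1}c)=(xh^{-1})c$, which lies in $S=Dc$ if and only if $xh^{-1}\in D$, i.e.\ if and only if $x\in Dh$. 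Hence $\phi(x)\sim\phi(h)$ in $\Cay(G,S)$ exactly when $x$ is incident with the block $h$, so $\phi$ is the desired isomorphism.

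I do not expect a real obstacle: once the multiplication rule in $\Dih(H)$ is in hand, the statement is a bookkeeping verification. The only delicate point is the choice of identification of blocks with group elements --- sending the block $h$ to $h^{-1}c$ is what makes the incidence condition come out as $x\in Dh$ exactly, whereas the naive choice $h\mapsto hc$ would instead match the relabelled block family $\{Dh^{-1}:h\in H\}$; so one must keep the inverses and the order of multiplication consistent with the convention $a\sim b\iff ab^{-1}\in S$. It is also worth checking along the way that the counts match, namely that the incidence graph has $2n$ vertices and $|G|=2|H|=2n$.
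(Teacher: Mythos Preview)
Your proof is correct and follows essentially the same approach as the paper: both verify that $S=Dc$ consists of involutions (hence is inverse-closed) and then exhibit the identical isomorphism sending the point $h$ to $h$ and the block $Dh$ to $h^{-1}c$, checking that adjacency is preserved via the computation $x(h^{-1}c)^{-1}=xh^{-1}c\in Dc \iff x\in Dh$. Your version is somewhat more detailed in spelling out the well-definedness of the Cayley graph and the bipartition, but the core argument is the same.
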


\begin{proof}
We note that $S$ contains only elements of order $2$ and hence it is inverse-closed. The map that sends (point) $h$ to $h$ and (block) $Dh$ to $h^{-1}c$ is an isomorphism. Indeed, if $h_1 \sim Dh_2$ (in the incidence graph), then $h_1h_2^{-1} \in D$. It follows that
$h_1(h_2^{-1}c)^{-1}=h_1ch_2=h_1h_2^{-1}c \in Dc$ and therefore $h_1$ is adjacent to $h_2^{-1}c$ in the Cayley graph, and the other way around.
\end{proof}

This lemma applies for example in the case of the incidence graph of a Desarguesian projective plane (as we observed before in \cite[\S~3.5]{VJ}). Recall that a projective plane of order $q$ is a symmetric $2$-$(q^{2}+q+1,q+1,1)$ design
and the incidence graphs of projective planes are precisely the bipartite distance-regular graphs with diameter three and girth $6$ (i.e., with $c_2=1$); these have intersection array $\{k,k-1,k-1;1,1,k\}$, with $k=q+1$.
Indeed, it is well known that a Desarguesian projective plane can be constructed as the development of a (Singer) difference set in a cyclic group, hence by the above lemma, its incidence graph is indeed a Cayley graph, more specifically on a dihedral group. We conclude the following (well-known) result.
\begin{prop}
Let $\Gamma$ be a bipartite distance-regular graph with diameter $3$ and girth $6$. Then $\Gamma$ is the incidence graph of a projective plane. Moreover, if the projective plane is Desarguesian, then $\Gamma$ is a Cayley graph on a dihedral group.
\end{prop}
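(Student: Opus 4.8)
The plan is to read the first assertion off the structure theory for bipartite distance-regular graphs of diameter $3$ recalled above, and to obtain the second from the classical Singer cycle construction together with Proposition~\ref{lemma:devCayley}.

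For the first part, since $\Gamma$ is bipartite distance-regular of diameter $3$, it is the incidence graph of a symmetric $2$-$(n,k,\mu)$ design with $\mu=c_2$ and $k(k-1)=(n-1)\mu$ (cf.\ \cite[Thm.~5.10.3]{Go}). I would note that having girth $6$ is equivalent to $c_2=1$: girth $4$ would force two vertices to have at least two common neighbours, hence $c_2\geq 2$, while $c_2\geq 1$ always holds for diameter at least $2$, and conversely when $c_2=1$ a hexagon is immediate (take a point $p$ on two blocks $B_1\neq B_2$, a further point $q\in B_1$, a further point $r\in B_2$, and the unique block on $q$ and $r$). Thus $\mu=1$, so $n=k^2-k+1$, and the design is a projective plane of order $q=k-1$; conversely the incidence graph of any projective plane of order $q$ is bipartite distance-regular with intersection array $\{q+1,q,q;1,1,q+1\}$ and girth $6$.

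For the second part, assume in addition that the projective plane is Desarguesian, so that $\Gamma$ is the incidence graph of $PG(2,q)$. By Singer's theorem, $PG(2,q)$ carries a cyclic collineation group acting regularly on its points; equivalently, $PG(2,q)$ is the development of a planar $(q^2+q+1,\,q+1,\,1)$-difference set $D$ in the cyclic group $H=\mathbb{Z}_{q^2+q+1}$. Since $H$ is abelian, Proposition~\ref{lemma:devCayley} applies and gives that $\Gamma$, being the incidence graph of this development, is isomorphic to $\Cay(\Dih(H),Dc)$ with $\Dih(H)=H\rtimes\mathbb{Z}_2$. As $H$ is cyclic of order $q^2+q+1$, the generalized dihedral group $\Dih(H)$ is the ordinary dihedral group of order $2(q^2+q+1)$, so $\Gamma$ is a Cayley graph on a dihedral group.

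I do not anticipate a genuine obstacle: apart from the Singer construction, every step is bookkeeping with facts already recorded in this section, and the existence of a Singer cycle --- equivalently, of a planar difference set --- for the Desarguesian plane is classical. The only point that needs a little care is the degenerate boundary case $q=1$, where the ``plane'' is the triangle $2$-$(3,2,1)$ design and $\Gamma$ is the hexagon $C_6$; this is harmless, since a $(3,2,1)$-difference set exists in $\mathbb{Z}_3$, $\Dih(\mathbb{Z}_3)\cong D_6$, and $C_6$ is indeed a Cayley graph on $D_6$.
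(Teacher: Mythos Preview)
Your proof is correct and follows essentially the same route as the paper: identify girth $6$ with $c_2=1$ to recognise $\Gamma$ as the incidence graph of a projective plane, then invoke the Singer difference set in a cyclic group and apply Proposition~\ref{lemma:devCayley} to realise the Desarguesian case on $\Dih(\mathbb{Z}_{q^2+q+1})$, which is dihedral. The paper treats these steps more tersely (relegating them to the paragraph preceding the proposition), but the argument is the same; your added detail on the girth--$c_2$ equivalence and the $q=1$ boundary case is fine.
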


For a table of abelian difference sets with small parameters, we refer to \cite[Table 18.73]{CD}.

We finally note that there are non-abelian difference sets for which the development is not a Cayley graph. Before we go deeper into distance-regular Cayley graphs, we now first derive some elementary results on bipartite Cayley graphs.

\section{Bipartite Cayley graphs}\label{sec:bipartiteCayley}

In this section, we will first consider some elementary properties of bipartite Cayley graphs, mostly motived by Proposition \ref{lemma:devCayley}. In Section \ref{sec:nonsingular}, we will go deeper into the case of nonsingular graphs, which will all be used in Section \ref{sec:d3} when we continue with the study of bipartite distance-regular Cayley graphs with diameter $3$.

\subsection{Normal subgroups and the semidirect product}

\begin{lem} \label{Normal subgroup}
Let $\Cay(G,S)$ be a connected bipartite Cayley graph. Then the part of this bipartite graph which contains the identity element is a normal subgroup $H$ of index $2$ in the group $G$ and $S \subseteq G \setminus H$.
\end{lem}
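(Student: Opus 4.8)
The plan is to identify the part $H$ containing $e$ explicitly as the set of elements of $G$ expressible as a product of an even number of elements of $S$, and then to verify by hand that this set is a subgroup of index $2$.

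First I would recall two standard facts: in a connected bipartite graph the bipartition is unique, and a vertex lies in the part of $e$ if and only if its distance to $e$ is even (equivalently, every walk between two fixed vertices has the same length parity). Translating into the Cayley graph, since $a\sim b$ means $ab^{-1}\in S$, a walk of length $\ell$ from $e$ to $g$ exists precisely when $g$ can be written as a product of $\ell$ elements of $S$ (here one uses $S=S^{-1}$ to absorb the inverses coming from the convention $ab^{-1}\in S$). Connectedness of $\Cay(G,S)$ guarantees that every $g\in G$ is such a product for some $\ell$, so
\[
H=\{g\in G : g \text{ is a product of an even number of elements of } S\}
\]
is a well-defined set containing $e$ (as the empty product). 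I may assume $|G|\ge 2$, so that $S\neq\varnothing$ and $G\setminus H$ is nonempty; the single-vertex case is degenerate and is excluded once the graph has an edge.

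Next I would check that $H$ is a subgroup: if $g=s_{1}\cdots s_{2k}$ and $h=t_{1}\cdots t_{2m}$ with all $s_{i},t_{j}\in S$, then $gh^{-1}=s_{1}\cdots s_{2k}t_{2m}^{-1}\cdots t_{1}^{-1}$ is a product of $2k+2m$ elements of $S$ because $S$ is inverse-closed; hence $gh^{-1}\in H$, and together with $e\in H$ this makes $H$ a subgroup. Since each element of $S$ is a product of a single element of $S$ (odd length), no element of $S$ lies in $H$, i.e.\ $S\subseteq G\setminus H$. For the index, fix any $s\in S$: if $g\in G\setminus H$ then $g$ has odd length, so $s^{-1}g$ has even length and $g\in sH$; conversely $sH\subseteq G\setminus H$ since $H$ is a subgroup and $s\notin H$. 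Thus $G\setminus H=sH$ and $[G:H]=2$, so $H$ is normal, being a subgroup of index $2$.

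The only points that genuinely need care are the bookkeeping with the Cayley convention $ab^{-1}\in S$ (so that walks correspond to products read in a fixed order, with inverse-closedness of $S$ supplying exactly what is needed for the parity argument) and the observation that the part containing $e$ is indeed the even-distance class; once these are pinned down, every remaining step is routine.
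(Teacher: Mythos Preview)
Your proof is correct and follows essentially the same idea as the paper's: both exploit that in a connected bipartite graph the part containing $e$ consists exactly of the vertices at even distance from $e$, and then verify the subgroup property via parity of walk lengths. The only differences are cosmetic: the paper transports a path of even length from $e\to b$ to a path $a\to ba$ using the right-translation automorphism of the Cayley graph, whereas you phrase the same thing as concatenation of $S$-products; and the paper deduces index $2$ from equicardinality of the two parts of a regular bipartite graph, while you do it via the coset $sH$.
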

\begin{proof}
Let $H$ be the part of the bipartite graph $\Cay(G,S)$ which contains the identity element $e$. If $a,b \in H$, then $ba \in H$. To see this let $d(e,b)=2n$, where $n \in \mathbb{N}$. Then there exists a path of length $2n$ between $a$ and $ba$ which implies that $ba \in H$. Moreover, similarly, $a^{-1},b^{-1} \in H$. This implies that $H$ is a subgroup of $G$. On the other hand, each part of this regular bipartite graph has the same size and therefore the subgroup $H$ is normal since the index of this subgroup in $G$ is $2$ and this completes the proof.
\end{proof}

\begin{lem} \label{oddparameters}
Let $\Gamma=\Cay(G,S)$ be a connected bipartite Cayley graph of order $2n$ and valency $k$ and let $H$ be the part of this bipartite graph which contains the identity element. If $n$ is odd or $k$ is odd, then $G$ is isomorphic to $H \rtimes \mathbb{Z}_{2}$.
\end{lem}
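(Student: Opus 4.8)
The plan is to reduce the whole statement to a single task: finding an involution in $G\setminus H$. To begin, Lemma \ref{Normal subgroup} already tells us that $H$ is a normal subgroup of index $2$ in $G$ with $S\subseteq G\setminus H$, and by hypothesis $|H|=n$ and $|S|=k$ (the valency of a Cayley graph equals the size of its connection set). Now suppose we have produced an element $a\in G\setminus H$ with $a^{2}=e$. Then $K=\{e,a\}$ is a subgroup isomorphic to $\mathbb{Z}_{2}$; since $a\notin H$ we have $H\cap K=\{e\}$, and since $[G:H]=2$ and $K\not\subseteq H$ we get $HK=H\cup Ha=G$. By the definition of semidirect product recalled in Section \ref{Sec:drCgandDS}, this gives $G=H\rtimes K\cong H\rtimes\mathbb{Z}_{2}$. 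So it remains to exhibit such an $a$ in each of the two cases.

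If $k$ is odd, I would use that $S=S^{-1}$ and $|S|=k$. Consider the involution $s\mapsto s^{-1}$ acting on $S$; its orbits have size $1$ or $2$, and an orbit of size $1$ is precisely an element $s\in S$ with $s^{2}=e$. Since $|S|=k$ is odd, the number of size-$1$ orbits is odd, hence positive, so there is an $s\in S$ with $s^{2}=e$. As $S\subseteq G\setminus H$, this $s$ is an involution outside $H$, as needed.

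If $n$ is odd, I would instead argue by order arithmetic. Since $|H|=n$ is odd, Lagrange's theorem rules out any element of order $2$ in $H$. On the other hand $2\mid 2n=|G|$, so Cauchy's theorem yields an element $a\in G$ of order $2$; necessarily $a\in G\setminus H$, and we are done.

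The argument is short, and I do not anticipate a genuine obstacle: the one point that must be handled carefully is the (standard) equivalence between ``$G$ is a semidirect product $H\rtimes\mathbb{Z}_{2}$'' and ``$G$ contains an involution outside $H$'', together with checking in each case that the parity/counting hypothesis really forces an involution of the required type. Everything else is routine bookkeeping with cosets and elementary group theory.
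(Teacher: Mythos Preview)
Your proposal is correct and follows essentially the same approach as the paper: in both cases one produces an involution $a\in G\setminus H$ (via Cauchy/Lagrange when $n$ is odd, and via the pairing $s\leftrightarrow s^{-1}$ on $S$ when $k$ is odd) and then concludes $G=H\rtimes\langle a\rangle$. The only cosmetic difference is that the paper treats the two cases in the opposite order and is slightly terser.
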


\begin{proof} First, let $n$ be an odd number. Then the normal subgroup $H$ has odd order $n$. Because $G$ has even order, it contains an involution $a$, which clearly cannot be in $H$. Therefore $G = H \rtimes \seq{a}$.

Secondly, let $k$ be an odd number, so $|S|$ is odd.  Because $S=S^{-1}$, the number of $s\in S$ that is not an involution (i.e., for which $s\neq s^{-1}$) is even, so there must be an involution $a \in S$. Therefore $G = H \rtimes \seq{a}$.
\end{proof}

\subsection{Complete bipartite graphs}
Let us first make a few observations about the bipartite distance-regular graphs with diameter two: the regular complete bipartite graphs $K_{n,n}$, with $n>1$. These are Cayley graphs for any group $G$ of order $2n$ having a subgroup $H$ of order $n$, by considering $S=G\setminus H$. If moreover $S$ contains an involution $a$, then $G = H \rtimes \seq{a}$.
For odd $n$, it is clear from the above that this is the case. However, for even $n$, it is different. For example if $G$ is the (abelian) group $G =\mathbb{Z}_{2n}$, which has a (abelian) subgroup $H$ (of even numbers) isomorphic to $\mathbb{Z}_{n}$, but $G \setminus H$ does not have an involution. Still, $K_{n,n}$ can easily be constructed on a semidirect product $H \rtimes \mathbb{Z}_{2}$ such as the dihedral group. We will also see this distinction for bipartite distance-regular graphs with larger diameter.

\subsection{No involutions}

Let us try to find a general setting for the previous example, i.e., in the case that there are no involutions in $G\setminus H$. Consider a bipartite Cayley graph $\Gamma=\Cay(G,S)$ where the usual subgroup $H$ is such that a semidirect product of $H$ and $\mathbb{Z}_2$ can be defined (such as when $H$ is abelian). For $a \in G \setminus H$, we let $T_a=Sa^{-1}$, which is a subset of $H$. Let $G' = H \rtimes \seq{c}$, with $c^2=e$, but $c \notin G$. We now take $S'=T_ac$. If $S'$ is inverse-closed (in $G'$), then we can use it as a connection set and obtain a bipartite Cayley graph $\Gamma'=\Cay(G',S')$.

\begin{lem} \label{noinvolutions}
The Cayley graph $\Gamma'=\Cay(G',S')$ is isomorphic to $\Gamma=\Cay(G,S)$.
\end{lem}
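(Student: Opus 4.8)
The plan is to exhibit an explicit bijection $\varphi\colon G\to G'$ that carries edges of $\Gamma$ to edges of $\Gamma'$. The natural candidate is modeled on the isomorphism in the proof of Proposition~\ref{lemma:devCayley}: the part $H$ is common to both groups, so fix it on $H$, and send the other coset $Ha$ of $G$ to the other coset $Hc$ of $G'$. Writing a general element of $G\setminus H$ as $ha$ with $h\in H$, the first guess is $\varphi(ha)=hc$, but one should expect that a twist (an inverse, as in Proposition~\ref{lemma:devCayley}) is needed to make it a homomorphism-like map compatible with how $c$ and $a$ conjugate $H$. So I would try $\varphi(h)=h$ for $h\in H$ and $\varphi(ha)=h^{-1}c$ for $h\in H$ (equivalently $\varphi(x)=x$ on $H$ and $\varphi(x)=(xa^{-1})^{-1}c = ax^{-1}c$ on $G\setminus H$), and check it works; the exact form of the twist is dictated by the requirement below.

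**Key steps.**
First I would record that $\varphi$ is a bijection: it maps $H$ identically to $H$ and maps $G\setminus H$ bijectively onto $G'\setminus H=Hc$, since $ha\mapsto h^{-1}c$ is a bijection of $H$ onto $Hc$. Second, and this is the heart of the matter, I would verify that $\varphi$ sends edges to edges in both directions. Edges of $\Gamma$ come in one type only (since $\Gamma$ is bipartite with parts $H$ and $Ha$): an edge joins some $h_1\in H$ to some $h_2 a\in Ha$, and this is an edge iff $h_1(h_2a)^{-1}=h_1 a^{-1} h_2^{-1}\in S$, i.e.\ iff $h_1 h_2^{-1}\in S a^{-1} \cdot$ (conjugate) --- here one uses that $S=T_a c$-analog, namely $S a^{-1} = T_a$, so after moving the $a^{-1}$ past, adjacency becomes a condition of the form $h_1 a^{-1} h_2^{-1} a \cdot a^{-1}\in S$, i.e.\ a statement about $h_1\,(a^{-1}h_2^{-1}a)\in T_a$. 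On the $\Gamma'$ side, $\varphi(h_1)=h_1$ and $\varphi(h_2a)=h_2^{-1}c$, and $h_1(h_2^{-1}c)^{-1}=h_1 c h_2 = h_1\,(ch_2c)\,c = h_1\,({}^{c}\!h_2)\,c$, which lies in $S'=T_a c$ iff $h_1\cdot{}^{c}\!h_2\in T_a$, where ${}^{c}\!h_2$ denotes the action of $c$ on $h_2$ in $H\rtimes\seq c$. So the whole thing reduces to checking that $h_1(a^{-1}h_2^{-1}a)\in T_a \iff h_1\,({}^{c}\!h_2)\in T_a$; since $G'$ is \emph{some} semidirect product $H\rtimes\mathbb{Z}_2$, we may (and must) choose the action of $c$ on $H$ to be exactly the map $h\mapsto a^{-1}h^{-1}a = {}^{a}(h^{-1})$ --- one needs to confirm this is a well-defined order-(dividing-)$2$ automorphism of $H$, for which the key point is that conjugation by $a^2\in H$ together with inversion squares to the identity on $H$; this is where the hypothesis that a semidirect product of $H$ with $\mathbb{Z}_2$ can be defined (e.g.\ $H$ abelian, where $h\mapsto h^{-1}$ itself works and $a^2$ is irrelevant) gets used. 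Third, I would note that $S'$ being inverse-closed is assumed in the statement preceding the lemma, so $\Gamma'$ is genuinely a Cayley graph and no extra verification of that is needed.

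**Main obstacle.**
The delicate point is pinning down precisely which automorphism of $H$ defines $G'$ and checking it has order dividing $2$, so that $G'=H\rtimes\seq c$ with $c^2=e$ really exists and the adjacency bookkeeping above closes up; equivalently, showing the twist in $\varphi$ is forced and consistent. In the clean case $H$ abelian this is immediate ($h\mapsto h^{-1}$), which is the case the lemma is really aimed at, but stating it for a general $H$ admitting such a semidirect decomposition requires care that the chosen action matches conjugation-by-$a$-composed-with-inversion up to an inner automorphism of $H$ (inner automorphisms do not change the isomorphism type of the semidirect product), and that $S'=T_ac$ is then inverse-closed iff $S=T_a a$ was --- so the two inverse-closedness conditions are equivalent, consistent with the running assumption. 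Everything else is a routine substitution once $\varphi$ and the action of $c$ are correctly set up.
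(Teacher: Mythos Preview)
Your overall plan---exhibit an explicit bijection that fixes $H$ pointwise and sends the other coset of $G$ to the other coset of $G'$---is the same as the paper's. But your execution has a genuine gap, and it stems from an unfortunate choice of parametrization.

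The paper writes elements of $G\setminus H$ as $a^{-1}h$ (not $ha$) and defines $\varphi(a^{-1}h)=ch$. With this choice the adjacency check is a one-liner: $h_1\sim_\Gamma a^{-1}h_2$ iff $h_1(a^{-1}h_2)^{-1}=h_1h_2^{-1}a\in S$, while $\varphi(h_1)\sim_{\Gamma'}\varphi(a^{-1}h_2)$ iff $h_1(ch_2)^{-1}=h_1h_2^{-1}c\in S'=T_ac$; both conditions reduce to $h_1h_2^{-1}\in T_a=Sa^{-1}$. Notice that the action of $c$ on $H$ \emph{never appears}. So the lemma holds for \emph{any} semidirect product $G'=H\rtimes\langle c\rangle$ in which $S'$ happens to be inverse-closed, which is exactly how the lemma is stated.

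Your map $\varphi(ha)=h^{-1}c$, by contrast, forces the action of $c$ to be specific, and you have not identified it correctly. Redoing your computation: $h_1\sim_\Gamma h_2a$ iff $h_1a^{-1}h_2^{-1}\in S=T_a a$, i.e.\ $h_1(a^{-1}h_2^{-1}a^{-1})\in T_a$ (you wrote $a^{-1}h_2^{-1}a$, which is off by $a^2$); on the other side, $h_1\sim_{\Gamma'}h_2^{-1}c$ iff $h_1(ch_2c)\in T_a$. So your map is an isomorphism only if $chc=a^{-1}h^{-1}a^{-1}$ for all $h\in H$---but this map is not a homomorphism of $H$ in general (check $\sigma(h_1h_2)$ versus $\sigma(h_1)\sigma(h_2)$: they differ by an $a^{-2}$ in the middle), so such a $G'$ need not exist. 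Even in the abelian case your stated action $h\mapsto a^{-1}h^{-1}a$ does not make $\varphi$ an isomorphism. In short, the ``twist'' you anticipated is not needed at all if you parametrize the coset from the left as $a^{-1}H$; that is the whole trick.
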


\begin{proof} The map $\varphi:\Cay(G,S) \rightarrow \Cay(G',S')$ defined by $$\varphi(h)=h, \varphi(a^{-1}h)=ch,$$ for $h \in H$, is an isomorphism, as is easily checked.
\end{proof}

As a corollary, we obtain the following (cf.~Proposition \ref{lemma:devCayley}).
\begin{cor} \label{gendihedral}
Let $\Gamma=\Cay(G,S)$ be a connected bipartite Cayley graph and let $H$ be the part of this bipartite graph which contains the identity element. If $H$ is abelian, then  $\Gamma$ can be constructed as a Cayley graph on the generalized dihedral group $\Dih(H)$.
\end{cor}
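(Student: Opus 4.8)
The plan is to combine Lemma~\ref{Normal subgroup} with Lemma~\ref{noinvolutions}, specialising the auxiliary group $G'$ in the discussion preceding Lemma~\ref{noinvolutions} to be the generalized dihedral group. First, Lemma~\ref{Normal subgroup} gives that $H$ is a normal subgroup of index $2$ in $G$ and that $S\subseteq G\setminus H$. Since $H$ is abelian, the generalized dihedral group $\Dih(H)=H\rtimes\seq{c}$ is defined, with $c^2=e$, $c\notin H$, and $chc=h^{-1}$ for all $h\in H$; we take a copy in which $c\notin G$. Now fix any $a\in G\setminus H$, put $T_a=Sa^{-1}\subseteq H$, and set $S'=T_ac\subseteq\Dih(H)$.

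The one thing to verify before Lemma~\ref{noinvolutions} applies is that $S'$ is a legitimate connection set in $\Dih(H)$, i.e.\ that it is inverse-closed and avoids the identity. Both are immediate from the structure of $\Dih(H)$: every element of the non-trivial coset $Hc$ is an involution, since $(hc)(hc)=h(ch)c=h(h^{-1}c)c=e$ for $h\in H$; as $S'=T_ac\subseteq Hc$, we get $S'=(S')^{-1}$, and $e\notin Hc$ forces $e\notin S'$. This is precisely the point where we use that the action in the semidirect product is inversion --- for a general semidirect product $S'$ need not be inverse-closed, which is why the discussion before Lemma~\ref{noinvolutions} carries that as a hypothesis --- so there is no real obstacle here beyond this short check.

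With $S'$ a valid connection set, Lemma~\ref{noinvolutions} yields that $\Cay(\Dih(H),S')$ is isomorphic to $\Cay(G,S)=\Gamma$, the isomorphism being the map $h\mapsto h$, $a^{-1}h\mapsto ch$ for $h\in H$; hence $\Gamma$ is a Cayley graph on $\Dih(H)$, as claimed. One could alternatively bypass Lemma~\ref{noinvolutions} and check adjacency directly --- for $h_1,h_2\in H$ one has $h_1\sim a^{-1}h_2$ in $\Gamma$ iff $h_1h_2^{-1}\in T_a$ iff $h_1(ch_2)^{-1}=h_1h_2^{-1}c\in S'$ --- but invoking Lemma~\ref{noinvolutions} is cleaner and makes transparent the kinship with Proposition~\ref{lemma:devCayley}, which is exactly the special case where $G$ is already a generalized dihedral group.
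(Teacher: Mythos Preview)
Your proof is correct and follows essentially the same approach as the paper: apply the construction preceding Lemma~\ref{noinvolutions} with $G'=\Dih(H)$, and note that $S'$ is inverse-closed because every element of $\Dih(H)\setminus H$ is an involution. You spell out more of the verification (that $e\notin S'$, the explicit isomorphism, and an alternative direct adjacency check), but the argument is the same.
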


\begin{proof} We can apply the above construction with $G'=\Dih(H)$ for any $a \in G \setminus H$. The set $S'$ is inverse-closed because all elements in $\Dih(H) \setminus H$ are involutions.
\end{proof}

A particular example where this applies (and that is more interesting than the complete bipartite graph) is the following description as a Cayley graph of the $4$-cube (a bipartite distance-regular graph with diameter $4$); the only $4$-regular bipartite Cayley graph on $16$ vertices with integral eigenvalues, according to Minchenko and Wanless \cite{MW}.

Let $G=\mathbb{Z}_{4} \times \mathbb{Z}_{4}=\langle a,b \mid a^{4}=b^{4}=1,ab=ba \rangle$ and $S=\{a,a^{-1},b,b^{-1}\}$. Then $H=\langle ab,a^{2} \rangle$, which is isomorphic to $\mathbb{Z}_{4} \times \mathbb{Z}_{2}$, but $G \setminus H$ does not have involutions.
However, the $4$-cube can also be described on $\Dih(H)$, or $(\mathbb{Z}_{4} \times \mathbb{Z}_{2}) \rtimes \mathbb{Z}_2=\langle ab,a^{2},c \mid (ab)^{4}=( a^{2} )^{2}=c^{2}=1,(ab) a^{2} = a^{2} (ab),(c(ab))^{2}=(c a^{2} )^2=1\rangle$, with connection set $S' = Sa^{-1}c$.

We note that the above construction is not the only way to obtain isomorphic bipartite Cayley graphs. For example, consider the below bipartite Cayley graph on $18$ vertices and valency $4$. Note that this is the only such Cayley graph with integral eigenvalues \cite{MW}; it is the bipartite double of the Paley graph $P(9)$.

Indeed, let $G=\mathbb{Z}_{6} \times \mathbb{Z}_{3}=\langle a,b \mid a^{6}=b^{3}=1,ab=ba \rangle$, $S=\{a,a^{5},a^{3}b,a^{3}b^{2}\}$, $G'=(\mathbb{Z}_{3} \times \mathbb{Z}_{3})\rtimes \mathbb{Z}_{2}=\langle a^{2},b,c \mid ( a^{2} )^{3}=b^{3}=c^{2}=1, a^{2}b=ba^{2} ,ca^{2}c=a^{-2},cbc=b^{-1} \rangle$, and $S'=\{c,a^{2}c,bc,a^{2}bc\}$. The normal subgroup $H$ in $G$ is isomorphic to $\mathbb{Z}_{3} \times \mathbb{Z}_{3}$ but it can be checked that there is no element $g \in G\setminus H$, such that $S'=Sg^{-1}c$. Still, the corresponding Cayley graphs are isomorphic.

\subsection{Bipartite nonsingular Cayley graphs}\label{sec:nonsingular}

In this section, we consider bipartite Cayley graphs that have no eigenvalues $0$, in order to obtain similar results as in the previous section for the cases that $k$ is even and $n$ is even, but not divisible by $4$. Note that bipartite distance-regular graphs with odd diameter have no eigenvalue $0$ (contrary to those with even diameter).

\begin{prop} \label{zeroeigenvalue}
Let $\Gamma=\Cay(G,S)$ be a connected bipartite Cayley graph of order $2n$ and valency $k$ and let $H$ be the part of this bipartite graph which contains the identity element. If $n$ is not divisible by $4$ and $\Gamma$ has no eigenvalue $0$, then $G$ is isomorphic to $H \rtimes \mathbb{Z}_{2}$.
\end{prop}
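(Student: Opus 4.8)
The plan is to produce an involution in $G\setminus H$: since $H$ has index $2$, such an involution generates a complement to $H$, and $G\cong H\rtimes\mathbb{Z}_2$ is equivalent to its existence. By Lemma~\ref{Normal subgroup}, $H$ is normal of index $2$ with $S\subseteq G\setminus H$, and by Lemma~\ref{oddparameters} we may assume $n$ is even; since $4\nmid n$ this gives $n=2m$ with $m$ odd, so $|G|=4m$ and a Sylow $2$-subgroup $P$ of $G$ has order $4$. Because $4\nmid|H|=2m$, the subgroup $P$ is not contained in $H$ (and, $H$ being normal, neither is any conjugate of $P$); hence $P\cap H$ has index $2$ in $P$ and so order $2$.

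Now suppose, for contradiction, that $G\setminus H$ contains no involution. First I would dispose of the case $P\cong\mathbb{Z}_2\times\mathbb{Z}_2$: then $P$ has three involutions, exactly one lying in the order-$2$ subgroup $P\cap H$, so the other two lie in $P\setminus H\subseteq G\setminus H$, a contradiction. Hence $P\cong\mathbb{Z}_4$. Note this forces every involution of $G$ to lie in $H$: an involution $s$ generates the unique order-$2$ subgroup of some Sylow $2$-subgroup $Q\cong\mathbb{Z}_4$, and $Q\cap H$ has order $2$, so $Q\cap H=\seq{s}$ and $s\in H$.

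The heart of the matter is that a cyclic Sylow $2$-subgroup is incompatible with the absence of the eigenvalue $0$. Using the classical fact that a finite group with cyclic Sylow $2$-subgroup has a normal $2$-complement (which can be proved by the Cayley regular-representation / sign-of-permutation argument, or cited), $G$ has a normal subgroup $N$ of odd order $m$ with $G/N\cong\mathbb{Z}_4$; since $N$ has odd order and $[G:H]=2$ we get $N\subseteq H$, whence $H/N$ is the unique order-$2$ subgroup of $G/N$. Composing $G\to G/N\cong\mathbb{Z}_4$ with a faithful character of $\mathbb{Z}_4$ yields a linear character $\chi\colon G\to\mathbb{C}^{\times}$ with $\chi(H)\subseteq\{1,-1\}$ and $\chi(g)\in\{i,-i\}$ for all $g\in G\setminus H$. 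Put $S_{+}=\{s\in S:\chi(s)=i\}$ and $S_{-}=\{s\in S:\chi(s)=-i\}$; since $\chi(s^{-1})=\overline{\chi(s)}$ and $S=S^{-1}$, the map $s\mapsto s^{-1}$ is a bijection $S_{+}\to S_{-}$, so $|S_{+}|=|S_{-}|$. The vector $(\chi(g))_{g\in G}$ is an eigenvector of the adjacency matrix of $\Cay(G,S)$ with eigenvalue $\sum_{s\in S}\chi(s)=i\bigl(|S_{+}|-|S_{-}|\bigr)=0$, contradicting the hypothesis. Therefore $G\setminus H$ contains an involution and $G\cong H\rtimes\mathbb{Z}_2$.

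The main obstacle is the cyclic-Sylow case: one needs the group-theoretic input producing a homomorphism $G\to\mathbb{Z}_4$ that is compatible with $H$ (the normal $2$-complement, best isolated as a short lemma or reference), together with the observation that inverse-closedness of $S$ makes the two ``imaginary'' halves of $S$ equal in size, so that the $\chi$-eigenvalue collapses to $0$. The Klein four-group case and the initial reductions are routine.
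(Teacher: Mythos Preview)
Your proof is correct and follows the same overall architecture as the paper's: reduce via Lemma~\ref{oddparameters} to $n=2m$ with $m$ odd, observe that the Sylow $2$-subgroup has order $4$ and is not contained in $H$, dispose of the Klein four case directly, and in the cyclic case invoke a normal $2$-complement $N$ to obtain $G/N\cong\mathbb{Z}_4$ with $N\subseteq H$.

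The one genuine difference is in how you extract the eigenvalue~$0$. The paper uses the equitable partition of $\Gamma$ into the four cosets of $N$, checks that inverse-closedness of $S$ forces $|S\cap Na|=|S\cap Na^{3}|=k/2$, so the quotient matrix is $k/2$ times the adjacency matrix of a $4$-cycle, which has eigenvalue $0$; then it cites the standard fact that eigenvalues of the quotient matrix are eigenvalues of $\Gamma$. You instead lift a faithful character of $G/N\cong\mathbb{Z}_4$ to a linear character $\chi$ of $G$, note that $S\subseteq G\setminus H$ forces $\chi(S)\subseteq\{i,-i\}$, pair off $S_{+}$ and $S_{-}$ via inversion, and read off the eigenvalue $\sum_{s\in S}\chi(s)=0$ directly from the eigenvector $(\chi(g))_{g\in G}$. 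These are two faces of the same phenomenon (your $\chi$ is precisely one of the eigenvectors of the $4$-cycle quotient, lifted back to $G$), but your version is a bit more self-contained: it avoids citing the equitable-partition interlacing lemma and makes the role of $S=S^{-1}$ completely explicit. The paper's version, on the other hand, fits naturally with the equitable-partition language used elsewhere in the paper (e.g.\ in Proposition~\ref{dihedral}).
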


\begin{proof} By Lemma \ref{oddparameters}, we may assume that $n$ is even. Assume that $n$ is not divisible by $4$, so that $n=2m$ for some odd $m$. Then the Sylow $2$-subgroup $L$, say, of $G$ has order $4$ and $L$ is not contained in the normal subgroup $H$ (the latter which has order $n$).

If $L$ is cyclic, then we may use a so-called normal $p$-complement theorem. Indeed, by \cite[Cor.~10.24]{Rose}, there exists a normal subgroup $N$ of (odd) order $m$ in the group $G$ such that $G=NL$ and $N \cap L =\{e\}$. Now the quotient group $G/N$ is cyclic because $L$ is cyclic, so $G/N=\{N,Na,Na^{2},Na^{3}\}$ for some $a \notin N$.
Note also that $N$ is contained in $H$, for otherwise $G=H \cup gH$ for some $g \in N \setminus H$, so $G=NH$, which implies that $|G|=8|H \cap N|$, which is a contradiction.
Because we may assume that the involution $a^2$ is in $H$, it follows that $H=N \cup Na^{2}$.

Next, we will use that the cosets of a normal subgroup form an equitable partition in a Cayley graph \cite[\S~2.3]{VJ}. In this case, we obtain an equitable partition with four parts $G/N$ for $\Gamma$. Moreover, we have a very particular quotient matrix. If $xa \in S$, where $x \in N$, then $(xa)^{-1} \in Na^{3}$. This implies that the numbers of adjacent vertices to a fixed vertex ($x$) of $N$ in $Na$ and in $Na^{3}$ are equal. It follows that the quotient matrix equals $k/2$ times the adjacency matrix of the $4$-cycle, which has eigenvalue $0$. Thus, also $\Gamma$ has eigenvalue $0$ \cite[Lemma~2.3.1]{BH}, which contradicts our assumption. Hence the Sylow $2$-subgroup $L$ cannot be cyclic.

Thus, $L$ is isomorphic to $\mathbb{Z}_{2} \times \mathbb{Z}_{2}$, and $L\setminus H$ (which is nonempty) contains an involution $a$, and it follows that $G = H \rtimes \seq{a}$.
\end{proof}

\section{Partial geometric difference sets}\label{PGDS}

Before we apply the results of Section \ref{sec:bipartiteCayley}, we first need to introduce partial geometric designs and difference sets. Particular cases of these are also defined, as they naturally connect to bipartite distance-regular graphs with diameter $4$.

\subsection{Partial geometric designs and distance-regular graphs with diameter $4$}

A symmetric partial geometric design --- or symmetric $1\frac{1}{2}$-design --- with parameters $(n,k,\alpha,\beta)$ is a $1$-design with $n$ points and $n$ blocks of size $k$ with the property that for each point-block pair $(p,B)$, the number of incident point-block pairs $(p',B')$, $p'\neq p$, $B' \neq B$, with $p' \in B$ and $p \in B'$ equals $\beta$ or $\alpha$, depending on whether $p$ is in $B$ or not, respectively.
Note that every symmetric design is also a partial geometric design.

The incidence graphs of symmetric partial geometric designs are precisely the regular bipartite graphs with four or five distinct eigenvalues \cite{VS}. Not all regular bipartite graphs with five distinct eigenvalues are distance-regular, as in the case of four eigenvalues. It was shown however \cite[Prop.~3.5]{AlmostDRG} that if in such a graph, the number of common neighbors of two vertices at distance $2$ is constant ($c_2=\mu$), then it is distance-regular (with diameter $4$).

On the other hand, it is known (see \cite[Prop.~1.7.1]{BCN}) that a bipartite distance-regular graphs with diameter $4$ is the incidence graph of an incidence structure called (square) partial $\lambda$-geometry, as introduced by Drake \cite{Drake}. We need not further define these but instead build on the definition of (symmetric) partial geometric design with the additional property that any two points meet in either $0$ or $\mu$ blocks, and dually, any two blocks share either $0$ or $\mu$ points.\footnote{We denote the $\lambda$ in partial $\lambda$-geometry by $\mu$.}

In general, a bipartite distance-regular graph $\Gamma$ with diameter $4$ on $2n$ vertices has intersection array $\{k,k-1,k-\mu,k-c_3;1,\mu,c_3,k\}$, so $\mu = c_{2}$, and $c_3=\frac{k(k-1)(k-\mu)}{(n-k)\mu}$ (which follows from the standards relations between parameters). 
The halved graphs of $\Gamma$ are strongly regular graphs, with parameters following from the above intersection array. Recall that if $\Gamma$ is a Cayley graph, then also these halved graphs are Cayley graphs, which may give extra restrictions for existence as a Cayley graph.

An example where this applies is the distance-regular graph on $100$ vertices (and a related partial $5$-geometry on 50 points) that can be constructed from the cocliques in the Hoffman-Singleton graph (see \cite[Thm.~13.1.1(iv)]{BCN}). This graph is not a Cayley graph because its halved graphs are the complement of the Hoffman-Singleton graph, which is known not to be a Cayley graph \cite{HoSiNO, Resmini}.

Note finally that if we view the (distance-regular) $\Gamma$ as the incidence graph of a symmetric partial geometric design with parameters $(n,k,\alpha,\beta)$, then it follows that $\alpha=\mu c_3$ and $\beta=(k-1)(\mu-1)$ (we omit a derivation, which is similar as the later derivation in the group case in Proposition \ref{prop:0mupgds}).

\subsection{Partial $\mu$-geometric difference sets}

A partial geometric difference set --- or $1\frac{1}{2}$-difference set --- in a finite group $H$ with parameters $(n,k,\alpha,\beta)$, as introduced by Olmez \cite{O}, is a $k$-subset $D$ of $H$, where $|H|=n$, with the property that every $h \in H$ can be expressed as $d_1d_2^{-1}d_3$, with $d_1,d_2,d_3 \in D$, in either $2k-1+\beta$ or $\alpha$ ways, depending on whether $h \in D$ or not, respectively. Note that the contribution $2k-1$ comes from the ``trivial" ways to express $h \in D$ as a required triple product. As in the case of the usual difference sets, also here the development of a partial geometric difference set $D$ is a symmetric partial geometric design; moreover, $D^{-1}$ is also a partial geometric difference set and its development is the dual design (which has the same parameters).

Given the above characterization of bipartite distance-regular graphs with diameter $4$ among the incidence graphs of symmetric partial geometric designs, it follows easily that the incidence graph of the development of a partial geometric difference set $D$ is distance-regular with $c_2=\mu$ if and only if every $h \in H\setminus\{e\}$ can be written as $d_1d_2^{-1}$, with $d_1,d_2 \in D$ in $\mu$ or $0$ ways and --- dually --- every $h \in H\setminus\{e\}$ can be written as $d_1^{-1}d_2$, with $d_1,d_2 \in D$ in $\mu$ or $0$ ways. We call such a partial geometric difference set a partial $\mu$-geometric difference set.
Note that a difference set is a degenerate case of this; in which case the diameter is $3$ instead of $4$, as we saw before.

We can now obtain a similar characterization as in Proposition \ref{Bipartite distance-regular and difference set}.

\begin{prop}\label{prop:0mupgds}
Let $G$ be a group of order $2n$ and $S$ a subset of $G$ of size $k$. Then the following statements are equivalent:
\begin{enumerate}
\item $S \subseteq G \setminus \{e\}$, $S = S^{-1}$ and $\Cay(G,S)$ is a bipartite distance-regular graph with diameter $4$ and intersection array $\{k,k-1,k-\mu,k-c_3;1,\mu,c_3,k\}$;
\item there is a subgroup $H$ of index $2$ in $G$ such that for every $a \in G \setminus H$, the set $D=Sa^{-1}$ is a  partial $\mu$-geometric difference set with parameters $(n,k,\mu c_3,(k-1)(\mu-1))$ satisfying $D^{-1}=aDa$;
\item there is a subgroup $H$ of index $2$ in $G$ and an element $a \in G \setminus H$ such that the set $D=Sa^{-1}$ is a partial $\mu$-geometric difference set with parameters $(n,k,\mu c_3,(k-1)(\mu-1))$ satisfying $D^{-1}=aDa$. \end{enumerate} \end{prop}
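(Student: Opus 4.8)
The plan is to establish the cycle $(1)\Rightarrow(2)\Rightarrow(3)\Rightarrow(1)$; the implication $(2)\Rightarrow(3)$ is immediate, since $G\setminus H\neq\emptyset$. For the two substantive implications I would rely on the dictionary between a connection set $S$ lying in the non-identity coset of an index-$2$ subgroup $H$ and a $k$-subset $D$ of $H$, namely $S=Da$, $D=Sa^{-1}$ for $a\in G\setminus H$, together with one explicit graph isomorphism. First I would record the elementary observation that, for such $H$, $a$ and $D=Sa^{-1}\subseteq H$, one has $S\subseteq G\setminus\{e\}$ with $S=S^{-1}$ precisely when $D^{-1}=aDa$: indeed $S^{-1}=(Da)^{-1}=a^{-1}D^{-1}$, which equals $Da=S$ exactly when $aDa=D^{-1}$, while $S=Da\subseteq Ha$ forces $e\notin S$. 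Secondly I would check that the map $\varphi$ sending a point $x$ of the development of $D$ to $x\in H$ and a block $Dy$ to $a^{-1}y\in Ha$ is an isomorphism from the incidence graph of the development of $D$ onto $\Cay(G,S)$: it is a bijection onto $G=H\cup Ha$, and $x\in Dy$ iff $xy^{-1}\in D$ iff $xy^{-1}a\in Da=S$ iff $\varphi(x)\varphi(Dy)^{-1}=xy^{-1}a\in S$. (That the translates $Dy$ are pairwise distinct---so that $\varphi$ is well defined and the development has $n$ blocks---is automatic: in $(3)\Rightarrow(1)$ it is built into $D$ being a partial $\mu$-geometric difference set, and in $(1)\Rightarrow(2)$ it follows because $\Cay(G,S)$, being a bipartite distance-regular graph of diameter $4$, has $b_2=k-\mu>0$ and hence no two vertices with identical neighbourhoods.)

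For $(3)\Rightarrow(1)$: from $D^{-1}=aDa$ we get $S=S^{-1}$, so $\Cay(G,S)$ is a well-defined bipartite graph with parts $H$ and $Ha$; since $D$ is a partial $\mu$-geometric difference set---and, as its parameters show, not an ordinary difference set---the characterization recalled in Section~\ref{PGDS} says the incidence graph of its development is a bipartite distance-regular graph of diameter $4$ with $c_2=\mu$, so by $\varphi$ the same holds for $\Cay(G,S)$, which then has $2n$ vertices, valency $k$, and the stated intersection array.

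For $(1)\Rightarrow(2)$: by Lemma~\ref{Normal subgroup} the part $H$ of $\Cay(G,S)$ containing $e$ is normal of index $2$ with $S\subseteq G\setminus H$; fixing an arbitrary $a\in G\setminus H$, put $D=Sa^{-1}\subseteq H$, so $|D|=k$, and $D^{-1}=aDa$ follows from $S=S^{-1}$. Via $\varphi$ the incidence graph of the development of $D$ is isomorphic to $\Cay(G,S)$, hence is a bipartite distance-regular graph of diameter $4$; by the description in Section~\ref{PGDS} (and \cite{VS}) such graphs are incidence graphs of symmetric partial geometric designs with the additional ``$0$-or-$\mu$'' property on pairs of points and pairs of blocks, and translating this back to the group yields that $D$ is a partial $\mu$-geometric difference set \cite{O}. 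It remains to identify its parameters $(n,k,\alpha,\beta)$ inside this design. For a flag $(p,B)$ each of the $k-1$ other points of $B$ lies together with $p$ in exactly $\mu$ blocks, one of them being $B$, so $\beta=(k-1)(\mu-1)$; for an antiflag $(p,B)$ the point $p$ and block $B$ are at distance $3$ in the incidence graph, and $\alpha$ equals the number of walks of length $3$ from $p$ to $B$, which is $\mu c_3$ because exactly $c_3$ of the neighbours of $p$ lie at distance $2$ from $B$ and each such neighbour accounts for $\mu$ of these walks. As $a$ was arbitrary, this gives $(2)$.

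The argument is conceptually a straightforward adaptation of Proposition~\ref{Bipartite distance-regular and difference set}, and I expect the main care to go into the bookkeeping: getting the ``twist'' in $\varphi$ right---note that $G$ is not assumed dihedral, so $a$ need not be an involution, which is exactly why the hypothesis is $D^{-1}=aDa$ and why $\varphi$ maps $Dy$ to $a^{-1}y$ rather than something like $y^{-1}c$---and invoking the Section~\ref{PGDS} characterization cleanly enough to pin down simultaneously that the diameter is $4$ and that both ``$0$-or-$\mu$'' counting conditions hold. The computation $\alpha=\mu c_3$, $\beta=(k-1)(\mu-1)$ is the short walk-counting ``derivation'' promised in the discussion preceding the proposition.
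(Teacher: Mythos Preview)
Your argument is correct and its overall architecture matches the paper's: the same cycle $(1)\Rightarrow(2)\Rightarrow(3)\Rightarrow(1)$, the same elementary equivalence between $S=S^{-1}$ and $D^{-1}=aDa$, and the same explicit isomorphism between $\Cay(G,S)$ and the incidence graph of the development of $D$ (the paper writes it as $\varphi(h)=h$, $\varphi(a^{-1}h)=Dh$, which is just the inverse of your map).

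The one genuine difference is in $(1)\Rightarrow(2)$. The paper verifies the partial $\mu$-geometric difference set conditions by a direct group-ring computation inside $H$: from $S^2=k\{e\}+\mu S_2$ one gets $DD^{-1}=k\{e\}+\mu S_2$ and $D^{-1}D=k\{e\}+\mu\,aS_2a^{-1}$, and then $DD^{-1}D=kD+\mu S_2Sa^{-1}=(2k-1)D+(\mu-1)(k-1)D+\mu c_3(H\setminus D)$, reading off $\alpha=\mu c_3$ and $\beta=(k-1)(\mu-1)$ immediately. You instead push everything through $\varphi$ to the development, invoke the Section~\ref{PGDS} characterization of bipartite diameter-$4$ distance-regular graphs as incidence graphs of partial geometric designs with the $0$-or-$\mu$ property, and then count flags and walks of length $3$ to pin down $\beta$ and $\alpha$. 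Your route is a little more design-theoretic and leans on the converse of Olmez's correspondence (that if the development of $D$ is a symmetric partial geometric design then $D$ is a partial geometric difference set); this converse is true by the obvious bijection between flags $(p',B')$ at $(x,Dy)$ and triples $(d_1,d_2,d_3)\in D^3$ with $d_1d_2^{-1}d_3=xy^{-1}$, but you should state it rather than leave it implicit in the citation. The paper's group-ring calculation is more self-contained and avoids that appeal; on the other hand, your walk-counting derivation of $\alpha$ and $\beta$ is exactly the ``derivation'' the paper alludes to but omits just before the proposition, so your version makes that promised computation explicit.
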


\begin{proof} $(1)\Rightarrow (2)$: Assume (1) and let $H$ be the part of the bipartite graph containing $e$. Let $a\in G\setminus H$ and $D=Sa^{-1}$. Then $D^{-1}=aDa$ and (in group ring notation) we have that $D^{-1}D= aS^{-1}Sa^{-1}=aS^{2}a^{-1}=a(k \{e\}+\mu S_2)a^{-1}=k \{e\}+\mu aS_2a^{-1}$, so every non-identity element of $H$ can be written as $d_1d_2^{-1}$ in $\mu$ or $0$ ways. Dually the same holds because $DD^{-1}= S^2=k \{e\}+\mu S_2$. Finally,
$DD^{-1}D=kD+\mu S_2D=kD+\mu S_2Sa^{-1}=kD+\mu ((k-1)S+c_3S_3)a^{-1}=kD+\mu (k-1)D+ \mu c_3(H \setminus D)= (2k-1)D+(\mu-1)(k-1)D+ \mu c_3(H \setminus D),$ hence $D$ is a partial $\mu$-geometric difference set with parameters $(n,k,\mu c_3,(k-1)(\mu-1))$.

$(2)\Rightarrow (3)$: Trivial.

$(3)\Rightarrow (1)$: Assume (3), then $S$ is inverse-closed because $D^{-1}=aDa$. From above considerations, it follows that the incidence graph of the development of $D$ is a bipartite distance-regular graph $\Gamma$ with diameter $4$ and intersection array $\{k,k-1,k-\mu,k-c_3;1,\mu,c_3,k\}$. This incidence graph is isomorphic to $\Cay(G,S)$. To see this, consider the map $\varphi:\Cay(G,S) \rightarrow \Gamma$ with $\varphi(h)=h$ and $\varphi(a^{-1}h)=Dh$, for $h \in H$. It is clear that $\varphi$ is a bijection. Moreover, if $h_{1} \sim a^{-1}h_{2}$, then $h_1(a^{-1}h_{2})^{-1} \in S=Da$, hence $h_{1}h_{2}^{-1} \in D$, $h_{1}\in Dh_{2}$, and finally $\varphi(h_{1}) \sim \varphi (a^{-1}h_{2})$, and the other way around.
\end{proof}

For example, the Van Lint-Schrijver partial geometry $pg(5,5,2)$ \cite{vLS} can be constructed as the development of a partial $1$-geometric difference set on the additive group of $GF(81)$. Indeed, if $\gamma$ is a primitive fifth root of unity, then $D=\{0,1,\gamma,\gamma^2,\gamma^3,\gamma^4\}$ is such a partial $1$-geometric difference set. Another nice description in the group $\mathbb{Z}_3^4$ is also available \cite[Construction 2]{vLS}.
Hence, by Proposition \ref{lemma:devCayley}, its incidence graph --- the distance-regular Van Lint-Schrijver graph --- is a Cayley graph. The recently constructed other partial geometry $pg(5,5,2)$ \cite{CST,K} does not have a group that acts transitively on the points, so its incidence graph is not a Cayley graph.

Also a few other examples are well known to be Cayley graphs, such as the folded $8$-cube and folded $9$-cube.

Another example that is not a Cayley graph (next to the earlier mentioned graph related to the Hoffman-Singleton graph) is the Leonard graph. This follows because the corresponding design is not self-dual, and hence the Leonard graph is not vertex-transitive \cite[Thm.~11.4.4]{BCN}.

We finally note that in our previous paper \cite[Prop.~3.3]{VJ}, we obtained the condition that $s$ must be $0$ or $4 \mod 6$ for the incidence graph of a generalized quadrangle of order $s$ to be a Cayley graph. Also for $s=4$, it is not a Cayley graph \cite[Prop.~3.7]{VJ}.

\subsection{Symmetric relative difference sets and antipodal graphs}

A special case of partial $\mu$-geometric designs is the family of symmetric transversal designs $STD_{\mu}[r\mu;r]$, or symmetric $(r,\mu)$-nets. Their incidence graphs are precisely the antipodal bipartite distance-regular graphs with 
intersection array $\{r\mu,r\mu-1,(r-1)\mu,1;1,\mu,r\mu-1,r\mu\}$. The corresponding concept of partial $\mu$-geometric difference sets is that of symmetric $(r\mu,r,r\mu,\mu)$-relative difference sets.

Let $H$ be a finite group and $N$ a proper subgroup of $H$ such that $|N|=r$ and $[H:N]=m$. Then a $k$-subset $D$ of $H$ is an $(m,r,k,\mu)$-relative difference set relative to $N$ (the ``forbidden'' subgroup) whenever every $h \in H\setminus\{e\}$ can be written as $d_1d_2^{-1}$, with $d_1,d_2 \in D$ in $0$ or $\mu$ ways, depending on whether $h \in N$ or not, respectively. Moreover, we say that $D$ is symmetric whenever $D^{-1}$ is also a relative difference set (possibly with a different forbidden subgroup). Jungnickel \cite{Ju} showed that if $N$ is normal, then $D$ is symmetric.

Similar as before, the development of a symmetric $(r\mu,r,r\mu,\mu)$-relative difference set is a symmetric transversal design $STD_{\mu}[r\mu;r]$, and hence its incidence graph is an antipodal bipartite distance-regular graph with diameter $4$ (in fact, it is an $r$-cover of a complete multipartite graph).

The following is in some sense a special case of Proposition \ref{prop:0mupgds}.

\begin{prop}\label{propRelDifSet}
Let $G$ be a group of order $2r^2\mu$ and $S$ a subset of $G$. Then the following statements are equivalent:
\begin{enumerate}
\item $S \subseteq G \setminus \{e\}$, $S = S^{-1}$ and $\Cay(G,S)$ is an antipodal bipartite distance-regular graph with diameter $4$ and intersection array $\{r\mu,r\mu-1,(r-1)\mu,1;1,\mu,r\mu-1,r\mu\}$;
\item there is a subgroup $H$ of index $2$ in $G$ and a subgroup $N$ of $H$ of order $r$ such that for every $a \in G \setminus H$, the set $D=Sa^{-1}$ is a symmetric $(r\mu,r,r\mu,\mu)$-relative difference set relative to $N$ in $H$ satisfying $D^{-1}=aDa$;
\item there is a subgroup $H$ of index $2$ in $G$, a subgroup $N$ of $H$ of order $r$, and an element $a \in G \setminus H$ such that the set $D=Sa^{-1}$ is a symmetric $(r\mu,r,r\mu,\mu)$-relative difference set relative to $N$ in $H$ satisfying $D^{-1}=aDa$. \end{enumerate} \end{prop}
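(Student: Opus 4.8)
The plan is to follow the scheme of the proof of Proposition~\ref{prop:0mupgds}, the only genuinely new ingredient being that, for an \emph{antipodal} bipartite distance-regular Cayley graph, the antipodal classes are the right cosets of a subgroup of order $r$. For $(1)\Rightarrow(2)$ I would let $H$ be the part of $\Gamma=\Cay(G,S)$ containing $e$, which by Lemma~\ref{Normal subgroup} is a normal subgroup of index $2$ with $S\subseteq G\setminus H$ (so $|H|=r^2\mu$ since $|G|=2r^2\mu$), and I would set $N=\{e\}\cup S_4$. Since distance in a Cayley graph is invariant under right translation, i.e.\ $d(x,y)=d(e,xy^{-1})$, the antipodal class of a vertex $g$ equals $Ng$; as these classes partition $G$, one gets $Ng=N$ for every $g\in N$, so $N$ is a subgroup, of order $1+k_4=r$ (the intersection array gives $k_4=r-1$), contained in $H$. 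Now fix $a\in G\setminus H$ and put $D=Sa^{-1}\subseteq H$; then $D^{-1}=aS^{-1}=aDa$ (using $S=S^{-1}$). Using that in a bipartite graph of diameter $4$ two vertices at distance $4$ have no common neighbour, one has, in group-ring notation as in Proposition~\ref{prop:0mupgds}, $S^2=k\{e\}+\mu S_2$; and since $H=\{e\}\cup S_2\cup S_4$ this forces $S_2=H\setminus N$. Hence $DD^{-1}=S^2=k\{e\}+\mu(H\setminus N)$, which is exactly the defining identity of an $(r\mu,r,r\mu,\mu)$-relative difference set relative to $N$; conjugating, $D^{-1}D=aS^2a^{-1}=k\{e\}+\mu(H\setminus aNa^{-1})$, and since $aNa^{-1}$ is again a subgroup of $H$ of order $r$ (here one uses $H\trianglelefteq G$), this shows $D^{-1}$ is a relative difference set as well, so $D$ is symmetric. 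None of this depends on the choice of $a$, so (2) follows.

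The implication $(2)\Rightarrow(3)$ is trivial. For $(3)\Rightarrow(1)$ I would argue exactly as in Proposition~\ref{prop:0mupgds}: from $S=Da$ and $D^{-1}=aDa$ one gets $S^{-1}=S$, while $e\notin S$ because $a^{-1}\notin H\supseteq D$; by the facts recalled just before the proposition, the development of the symmetric $(r\mu,r,r\mu,\mu)$-relative difference set $D$ is a symmetric transversal design $STD_\mu[r\mu;r]$, and hence its incidence graph $\Gamma$ is an antipodal bipartite distance-regular graph of diameter $4$ with the stated intersection array. It then remains to check $\Cay(G,S)\cong\Gamma$, which the map $\varphi$ with $\varphi(h)=h$ and $\varphi(a^{-1}h)=Dh$ ($h\in H$) does: it is a bijection, and $h_1\sim a^{-1}h_2$ in $\Cay(G,S)$ iff $h_1h_2^{-1}\in D$ iff $h_1\in Dh_2$ iff $\varphi(h_1)\sim\varphi(a^{-1}h_2)$ in $\Gamma$.

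I expect the main obstacle to be the middle part of the first implication: converting the purely combinatorial antipodality hypothesis into the algebraic facts that $N=\{e\}\cup S_4$ is a subgroup of $H$ of order $r$ and that $S_2=H\setminus N$. Once these are in place the rest is routine group-ring bookkeeping, entirely parallel to Proposition~\ref{prop:0mupgds}; this is indeed why the present statement is, as remarked, ``in some sense a special case'' of that one.
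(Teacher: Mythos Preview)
Your proposal is correct and follows essentially the same approach as the paper's own proof, which likewise sets $N=\{e\}\cup S_4$, uses antipodality to conclude $N$ is a subgroup, and then reduces everything else to the argument of Proposition~\ref{prop:0mupgds}. You have simply supplied the details that the paper omits (the coset argument for $N$ being a subgroup, the identification $S_2=H\setminus N$, and the explicit group-ring computations for $DD^{-1}$ and $D^{-1}D$); note incidentally that your forbidden subgroup $aNa^{-1}$ for $D^{-1}$ is the correct one under the paper's conventions.
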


\begin{proof} Most of the proof is omitted because it is similar as the proof of Proposition \ref{prop:0mupgds}. Additional details are as follows.

$(1)\Rightarrow (2)$: Let $N=S_4 \cup \{e\}$. Because the Cayley graph is antipodal, it follows that $N$ is a subgroup of $H$. It follows that $D=Sa^{-1}$ is an $(r\mu,r,r\mu,\mu)$-relative difference set relative to $N$ in $H$. Similarly, $D^{-1}$ is an $(r\mu,r,r\mu,\mu)$-relative difference set relative to the subgroup $a^{-1}Na$ in $H$.
\end{proof}

The antipodal bipartite distance-regular graphs with diameter $4$ and $\mu=1$ are incidence graphs of affine planes minus a parallel class of lines.
As we already observed before (but provided with few arguments) in \cite{VJ},
relative difference sets in the abelian group $GF(q)^2$ are known for the Desarguesian affine planes $AG(2,q)$ minus a parallel class of lines, for prime powers $q$. Again, by Proposition \ref{lemma:devCayley}, their incidence graphs, which are antipodal bipartite distance-regular graphs with diameter $4$, are therefore Cayley graphs.

Many more constructions of relative difference sets are known that give rise to distance-regular Cayley graphs. For such constructions, we refer to the survey paper on relative difference sets by Pott \cite{Pott}, for example.

A distinguished case is the one with $r=2$: symmetric transversal designs $STD_{\mu}[2\mu;2]$ give rise to the Hadamard graphs (on $8\mu$ vertices), which in turn are equivalent to (distance-regular) Hadamard matrices of size $2\mu$; see \cite[\S 1.8]{BCN}. The smallest Hadamard graphs are $C_8$ and the $4$-cube, which are clearly Cayley graphs. For $\mu=4$ and $\mu=6$, the (unique) Hadamard graphs are also Cayley graphs, as they can be obtained from appropriate relative difference sets (as in \cite{SchTan}).

We mention once more the distance-regular $4$-cube (recall also the earlier remarks in Section \ref{sec:bipartiteCayley}), for it has a cospectral graph: the Hoffman graph. This graph is neither a Cayley graph nor is it distance-regular. Still, it can be constructed from a square transversal design; however its dual is not a transversal design. In this case this implies that the Hoffman graph is only ``half distance-regular". Likewise, Hiramine \cite{Hi} constructed non-symmetric relative difference sets, among others one with parameters $(12,3,12,4)$. It gives rise to a half distance-regular graph that is cospectral to the Suetake graph (the unique distance-regular graph with intersection array $\{12,11,8,1;1,4,11,12\}$). The latter comes from the unique symmetric transversal design $STD_4[12;3]$, see \cite{Sue}.
We found a related relative difference set in the abelian group $H=\mathbb{Z}_{2} \times \mathbb{Z}_{3} \times \mathbb{Z}_{6}$. Indeed, let
$$D=\{000, 002, 004, 005, 011, 023, 100, 101, 114, 122, 123, 125\}.$$

\begin{prop}\label{prop:sue} The set $D$ is a relative difference set in the abelian group $H=\mathbb{Z}_{2} \times \mathbb{Z}_{3} \times \mathbb{Z}_{6}$, relative to the normal subgroup $N=\mathbb{Z}_3$. The incidence graph of its development is the Suetake graph, which is therefore a Cayley graph.
\end{prop}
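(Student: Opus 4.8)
The plan is to split the argument in two: a direct verification that $D$ has the stated relative difference set property, followed by an appeal to the machinery of Section~\ref{PGDS} and to the known uniqueness of the distance-regular graph with intersection array $\{12,11,8,1;1,4,11,12\}$.

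For the first (and main) part, take $N=\langle (0,0,2)\rangle=\{000,002,004\}$, the unique subgroup of order $3$ of the $\mathbb{Z}_6$-factor of $H$; note that $|H|=36$, $|N|=3$, $|D|=12$, and that the parameters are consistent since $12^2=4(36-3)+12$. I would then check that $D$ is a $(12,3,12,4)$-relative difference set relative to $N$, that is, that every element of $H\setminus N$ can be written as $d_1-d_2$ with $d_1,d_2\in D$ in exactly $4$ ways, while no nonzero element of $N$ can be so written. Concretely, among the $12\cdot 11=132$ nonzero differences $d_1-d_2$ ($d_1,d_2\in D$), each of the $33$ elements of $H\setminus N$ must occur exactly $4$ times and neither of the two nonzero elements of $N$ may occur. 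This is a finite computation, most conveniently carried out by computer --- or, if one prefers a shorter check, by verifying through characters that $\chi(D)=0$ for every nontrivial character $\chi$ of $H$ trivial on $N$ and $|\chi(D)|^2=12$ for every character nontrivial on $N$. Since $H$ is abelian, $N$ is normal, and hence by Jungnickel's theorem \cite{Ju} the relative difference set $D$ is automatically symmetric.

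For the second part, the development of the symmetric $(12,3,12,4)$-relative difference set $D$ is a symmetric transversal design $STD_4[12;3]$, so --- as recalled in Section~\ref{PGDS}, or via Proposition~\ref{propRelDifSet} applied with $G=\Dih(H)$, $a=c$ and $S=Dc$ --- its incidence graph is an antipodal bipartite distance-regular graph with intersection array $\{12,11,8,1;1,4,11,12\}$. Since the distance-regular graph with this intersection array is unique \cite{Sue}, this incidence graph is the Suetake graph. Finally, because $H$ is abelian, Proposition~\ref{lemma:devCayley} identifies the incidence graph of the development of $D$ with the Cayley graph $\Cay(\Dih(H),Dc)$, so the Suetake graph is a Cayley graph, as claimed. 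There is no conceptual obstacle here: once the set $D$ has been exhibited (it was itself found by computer search), the only real work is the difference count in the first part, and the only point requiring care is pinning down the correct forbidden subgroup $N$ --- the statement only specifies that $N\cong\mathbb{Z}_3$, and $H$ has four subgroups isomorphic to $\mathbb{Z}_3$, exactly one of which makes $D$ a relative difference set.
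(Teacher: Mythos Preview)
Your overall plan is sound and matches what the paper (which offers no proof for this proposition) implicitly relies on: a direct finite check that $D$ is a $(12,3,12,4)$-relative difference set, followed by uniqueness of the $STD_4[12;3]$ (hence of the Suetake graph) from \cite{Sue}, and Proposition~\ref{lemma:devCayley} for the Cayley-graph conclusion.

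However, you have chosen the wrong forbidden subgroup. You take $N=\langle(0,0,2)\rangle=\{000,002,004\}$, but these three elements all lie in $D$, so for instance $002-000=002$ and $004-000=004$ are nonzero elements of your $N$ that \emph{do} arise as differences from $D$; thus $\langle(0,0,2)\rangle$ is certainly not forbidden. The correct choice is $N=\langle(0,1,0)\rangle$, the $\mathbb{Z}_3$ direct factor of $H$ (which is also the natural reading of ``$N=\mathbb{Z}_3$'' in the statement). One sees at once that the twelve elements of $D$ have pairwise distinct $(\text{first},\text{last})$ coordinate pairs, so no difference $d_1-d_2$ with $d_1\neq d_2$ can lie in $\langle(0,1,0)\rangle$. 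With this correction the rest of your argument --- the finite difference count or the character check, and then the appeal to uniqueness and to Proposition~\ref{lemma:devCayley} --- goes through unchanged. Ironically, you flagged exactly this pitfall in your last sentence and then fell into it.
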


Another very interesting antipodal bipartite distance-regular graph with diameter $4$ is the hexacode graph on $36$ vertices \cite[Thm.~13.2.2]{BCN}, which comes from the unique $STD_2[6;3]$. We checked that its automorphism group does not have a regular subgroup, so it is not a Cayley graph. Besides the above two examples with $r=3$, we also mention the Pappus graph (the incidence graph of the affine plane of order $3$ minus a parallel class of lines; see above), and the four graphs coming from an $STD_3[9;3]$. The latter were classified by Mavron and Tonchev \cite{MaTo}. We checked that only the one with the smallest automorphism group is not a Cayley graph (in particular, it is not even vertex-transitive). The distance-transitive one can be constructed from a relative difference set in $\mathbb{Z}_{3}^3$, and the other two in $\mathbb{Z}_{9} \times \mathbb{Z}_{3}$ and $(\mathbb{Z}_{3} \times \mathbb{Z}_{3})\rtimes \mathbb{Z}_{3}$, respectively.

We finish this section by mentioning that we checked also that the distance-transitive graph coming from an $STD_2[8;4]$ is a Cayley graph (in particular, that it can be obtained from a relative difference set in $\mathbb{Z}_{4} \times \mathbb{Z}_{4}\times \mathbb{Z}_{2}$).

\section{Bipartite distance-regular Cayley graphs with diameter $3$}\label{sec:d3}

We now return to bipartite distance-regular graphs with diameter three to go deeper into the more general results in Section \ref{sec:bipartiteCayley}. The following result follows immediately from there.

\begin{prop}\label{prop:bip3semidirect}
Let $\Gamma=\Cay(G,S)$ be a distance-regular Cayley graph with intersection array $\{k,k-1,k-\mu ; 1,\mu,k\}$, and $H$ be the part of this bipartite graph which contains the identity element. Then $\Gamma$ can be constructed on $H \rtimes \mathbb{Z}_{2}$, except possibly when $|H|=0 \mod 4$, $k$ is even, and $H$ is non-abelian.
\end{prop}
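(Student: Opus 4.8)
The plan is to deduce Proposition~\ref{prop:bip3semidirect} from the general results on bipartite Cayley graphs in Section~\ref{sec:bipartiteCayley} by a case analysis on the parameters $n=|H|$ and $k$. Write $\Gamma=\Cay(G,S)$, which has $2n$ vertices and valency $k$, and recall from Lemma~\ref{Normal subgroup} that $H$ is the relevant normal subgroup of index~$2$. The key extra fact we may use about diameter-$3$ bipartite distance-regular graphs, noted in the preliminaries of Section~\ref{Sec:drCgandDS}, is that the spectrum is $\{k^1,\sqrt{k-\mu}^{\,[n-1]},-\sqrt{k-\mu}^{\,[n-1]},(-k)^1\}$; in particular $0$ is an eigenvalue \emph{only if} $k=\mu$, i.e.\ only for the trivial case $\mu=k-1$ (the graph $K_{n,n}$ minus a perfect matching). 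Since those trivial graphs are exactly the incidence graphs of the trivial designs and are Cayley graphs on the dihedral group $D_{2n}$ (as recalled in Section~\ref{Sec:drCgandDS}), they are already realized on $H\rtimes\mathbb{Z}_2$ regardless; so for the non-trivial case we may assume $\Gamma$ has no eigenvalue~$0$.

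Next I would split into cases. If $n$ is odd or $k$ is odd, Lemma~\ref{oddparameters} directly gives $G\cong H\rtimes\mathbb{Z}_2$, so we are done. Hence assume $n$ and $k$ are both even. If $n\not\equiv 0\pmod 4$, then since we have already reduced to the case where $\Gamma$ has no eigenvalue~$0$, Proposition~\ref{zeroeigenvalue} applies and gives $G\cong H\rtimes\mathbb{Z}_2$. The only remaining case is $n\equiv 0\pmod 4$ with $k$ even. In this situation, if $H$ is abelian we may invoke Corollary~\ref{gendihedral} (or Proposition~\ref{lemma:devCayley} via the difference-set description of Proposition~\ref{Bipartite distance-regular and difference set}): an abelian $H$ admits the generalized dihedral group $\Dih(H)=H\rtimes\mathbb{Z}_2$, on which $\Gamma$ can be reconstructed. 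This exhausts everything except $|H|\equiv 0\pmod 4$, $k$ even, and $H$ non-abelian, which is precisely the exceptional case left open in the statement.

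The only real subtlety is making sure the reduction to "no eigenvalue~$0$" is watertight: one must observe that $\mu=k-1$ forces the trivial graph, and separately verify that the trivial graph (which \emph{does} have eigenvalue~$0$, since then $k-\mu=1$ has square root $1$, actually no — one must double-check that $0$ is not a nonzero eigenvalue here and instead that $0$ simply is not in the spectrum when $k\neq\mu$) is handled by the explicit dihedral realization rather than by Proposition~\ref{zeroeigenvalue}. Concretely: the spectrum contains $0$ iff $k-\mu=0$, i.e.\ never for a genuine diameter-$3$ graph (where $k>\mu$), so in fact \emph{every} bipartite distance-regular graph with diameter exactly~$3$ has no eigenvalue~$0$, and Proposition~\ref{zeroeigenvalue} applies whenever $n\not\equiv 0\pmod 4$ with no caveat at all. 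The main obstacle is thus not in the logic of the case split but purely in confirming that each of Lemma~\ref{oddparameters}, Proposition~\ref{zeroeigenvalue}, and Corollary~\ref{gendihedral} covers its intended case with hypotheses genuinely satisfied; once that bookkeeping is in place, the four cases (i) $n$ or $k$ odd, (ii) $n\not\equiv 0\bmod 4$, (iii) $n\equiv 0\bmod 4$ and $H$ abelian, (iv) the stated exception, partition all possibilities and the proposition follows.
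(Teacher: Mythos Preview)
Your argument is correct and follows exactly the route the paper intends: invoke Lemma~\ref{oddparameters} when $n$ or $k$ is odd, Proposition~\ref{zeroeigenvalue} when $n\equiv 2\pmod 4$, and Corollary~\ref{gendihedral} when $H$ is abelian, leaving only the stated exception. The paper compresses this to ``follows immediately from Section~\ref{sec:bipartiteCayley}'' and, for the eigenvalue-$0$ hypothesis, relies on the remark just before Proposition~\ref{zeroeigenvalue} that bipartite distance-regular graphs of odd diameter never have eigenvalue~$0$; your detour through the trivial case $\mu=k-1$ (and the momentary conflation of $k=\mu$ with $\mu=k-1$) is unnecessary, since $k-\mu>0$ already for any diameter-$3$ graph.
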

Note that we are not claiming that $G$ is isomorphic to $H \rtimes \mathbb{Z}_{2}$.

In the remainder of this section, we study the (possibly) exceptional cases for the above theorem for certain well-known groups. Note that if $n=|H|$, then $k(k-1)=(n-1)\mu$, so if $n$ is even, then $\mu$ must be even as well. So we assume that $|H|=0 \mod 4$, and $k$ and $\mu$ are even.

First, observe that the symmetric group has a unique normal subgroup of index $2$, the alternating group, so also any bipartite Cayley graph on the symmetric group, the part containing the identity element must be the alternating group, and $G$ is isomorphic to $H \rtimes \mathbb{Z}_{2}$.

\subsection{The dihedral group}

Similarly as for the symmetric group, for $n$ odd, the dihedral group $D_{2n}$ has a unique normal subgroup of index $2$, the cyclic group (which is trivially in accordance with the above proposition).

If $n$ is even, then $D_{2n}$ has three normal subgroups $H$ of index $2$ and in each case, the group is the semidirect product of $H$ with $\mathbb{Z}_{2}$  \cite[Thm.~3]{N}. For the case where $H$ is not the cyclic group, we obtain further information below. Note that this may be relevant for the classification of distance-regular Cayley graphs on dihedral groups; see \cite{MP2}.

\begin{prop} \label{dihedral}
Let $\Gamma=\Cay(G,S)$ be a distance-regular Cayley graph with intersection array $\{k,k-1,k-\mu ; 1,\mu,k\}$, with $\mu<k-1$, such that $G$ is a dihedral group of order $2n$, with $n=2m$ even and with cyclic subgroup $C$ of order $n$. Let $H$ be the part of this bipartite graph which contains the identity element. If $H$ is not the cyclic group, then the Cayley graph $\Gamma_C=\Cay(C,S \cap C)$ is the incidence graph of a partial geometric design with parameters $(m,k_1,\alpha,\beta)$ and distinct eigenvalues $\{\pm k_1,\pm \sqrt{k-\mu},0\}$, where $k_1=|S \cap C|$, $k_2=k-k_1$, $\alpha=\frac{\mu(2k_1-k_2)}{2}$, and $2k_1-1+\beta-\alpha=(k_1-k_2)^{2}=k-\mu$.
\end{prop}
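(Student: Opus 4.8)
The plan is to transfer the structure of the diameter-$3$ Cayley graph $\Gamma=\Cay(G,S)$ to the cyclic group $C$ of order $n=2m$ via the equitable partition coming from $C$. Since $G$ is dihedral with cyclic subgroup $C$ of order $n$, and $H$ is a normal subgroup of index $2$ that is not cyclic, $H$ must be one of the two non-cyclic normal subgroups of index $2$ in $D_{2n}$ (which exist precisely because $n=2m$ is even); write $H=A\cup bA$ where $A=C\cap H$ is the cyclic subgroup of order $m=n/2$ and $b$ is a reflection. The connection set splits as $S=S_1\cup S_2$, where $S_1=S\cap C$ has size $k_1$ consists of rotations and $S_2=S\setminus C$ has size $k_2=k-k_1$ consists of reflections; since $\Gamma$ is bipartite with parts $H$ and $G\setminus H$, every element of $S$ lies in the non-trivial coset of $H$, so $S_1\subseteq C\setminus A$ and $S_2\subseteq bA\setminus$? — more precisely the rotations in $S$ lie in the coset $A z$ for the appropriate generator $z$ of order $2$ in $C/A$. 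First I would record these coset facts cleanly, as they control all the counting below.

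Next I would set up the group-ring identities. In $\mathbb{Z}[G]$ we have $S^2=k\{e\}+\mu S_2'$ where $S_2'=S_2(\Gamma)$ is the distance-$2$ set, which is exactly $H\setminus\{e\}$ since $\Gamma$ has diameter $3$ and is bipartite (the distance-$2$ vertices from $e$ are all of $H\setminus\{e\}$). Writing $S=S_1+S_2$ (abusing notation: $S_1$ the rotations, $S_2$ the reflections in $S$) and expanding, $S^2=(S_1^2+S_2^2)+(S_1S_2+S_2S_1)$; here $S_1^2+S_2^2\in\mathbb{Z}[C]$ and $S_1S_2+S_2S_1$ lies in the reflection coset $G\setminus C$. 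Since the right-hand side $k\{e\}+\mu(H\setminus\{e\})$ has its $\mathbb{Z}[C]$-part equal to $k\{e\}+\mu(A\setminus\{e\})$ and its reflection-coset part equal to $\mu(bA)$, I get two equations; the first one, $S_1^2+S_2^2=k\{e\}+\mu(A\setminus\{e\})$ in $\mathbb{Z}[C]$, is the key. Now I push this down to $\mathbb{Z}[C]$ further using $S_1S_1^{(-1)}$ in the cyclic group: since $S_1^{-1}=S_1$ (inverse-closed) and $S_1$ lives in a single coset of $A$, the product $S_1^2$ in $\mathbb{Z}[C]$ records the development of $S\cap C$ as a subset of $C$. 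Similarly $S_2^2$: each reflection squared is $e$, and a product of two distinct reflections $s s'$ is a rotation whose "difference" behaviour mirrors that of a subset of the cyclic group; this is exactly the standard computation showing $S_2S_2$ gives $k_2\{e\}+(\text{autocorrelation of the reflection parameters})$. The net effect is that $\Cay(C,S\cap C)$ has a controlled four-term group-ring square, which is the definition (in $\mathbb{Z}[C]$) of the incidence graph of a symmetric partial geometric design. I would then read off the parameters: the eigenvalue computation is immediate once we know $(\text{adjacency of }\Cay(C,S_1))^2 + (\text{contribution of }S_2)$ has the required form, giving eigenvalues $\pm k_1$ (from the all-ones vector, reflecting the two bipartition classes of $\Cay(C,S_1)$ — note $k_1=|S\cap C|$ need not force bipartiteness of $\Cay(C,S_1)$, but the coset structure does: $S_1$ lies in a non-trivial coset of $A$, so $\Cay(C,S_1)$ is bipartite with parts $A$ and $C\setminus A$), $0$ (with the multiplicity forced by $n-2$, since the partial geometric design is not a genuine design), and $\pm\sqrt{k-\mu}$ on the orthogonal complement.

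For the numerical parameters, I would compare coefficients. The coefficient identity $2k_1-1+\beta-\alpha=(k_1-k_2)^2$ and $\alpha=\mu(2k_1-k_2)/2$ should both drop out of matching the constant term and the "$h\in D$ vs $h\notin D$" coefficients in $S_1 S_1^{(-1)} S_1$ (the $1\tfrac12$-difference-set identity) against what the equations above force; the relation $(k_1-k_2)^2=k-\mu$ is the one extra constraint that comes from the reflection-coset equation $S_1S_2+S_2S_1=\mu(bA)$ evaluated at the identity coset element, i.e. from counting pairs $(s_1,s_2)$ with $s_1\in S_1$, $s_2\in S_2$, $s_1 s_2$ fixed. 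The main obstacle I anticipate is the bookkeeping of \emph{which} coset of $A$ inside $C$ each of $S_1$, $S_2S_2$, etc.\ lives in, and making sure the autocorrelation of the reflection part $S_2$ really does turn into the correlation of a subset of the \emph{cyclic} group $C$ (not of $A$) — equivalently, identifying the $m$ blocks of the partial geometric design correctly and checking self-consistency of the two parameter relations $\alpha=\mu(2k_1-k_2)/2$ and $2k_1-1+\beta-\alpha=k-\mu$ with the design's basic equation $k_1(k_1-1)+\beta k_1 = \alpha(m-1)+\cdots$. Once the coset arithmetic is pinned down, everything else is a routine comparison of group-ring coefficients and a diagonalisation of a $2\times2$-block quotient matrix, so I would keep that part terse and refer to the derivation pattern of Proposition \ref{prop:0mupgds}.
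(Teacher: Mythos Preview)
Your group-ring approach is a legitimate alternative to the paper's character-theoretic argument, but as written it has a real gap in the eigenvalue step and omits a case analysis that the statement requires.

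\textbf{The gap.} You claim the eigenvalues of $\Gamma_C=\Cay(C,S_1)$ are ``immediate'' from the rotation-part identity $S_1^2+S_2^2=k\{e\}+\mu(A\setminus\{e\})$ in $\mathbb{Z}[C]$. They are not. Writing $S_2=bT_2$ with $T_2\subseteq C$, one has $S_2^2=T_2^{-1}T_2$, and applying a nontrivial character $\chi_j$ of $C$ (with $\chi_j(A)=0$) yields only
\[
\lambda_j^2+|\nu_j|^2=k-\mu,\qquad \nu_j:=\chi_j(T_2),
\]
which bounds $|\lambda_j|$ but does not force $\lambda_j\in\{0,\pm\sqrt{k-\mu}\}$. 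What you are missing is that the reflection-part equation must be used at \emph{every} character, not just to extract the single scalar relation $(k_1-k_2)^2=k-\mu$. Rewriting $S_1S_2+S_2S_1=\mu\,bA$ inside $\mathbb{Z}[C]$ (using $s_1b=bs_1^{-1}$ and $S_1=S_1^{-1}$) gives $2S_1T_2=\mu A$; applying $\chi_j$ with $\chi_j(A)=0$ then yields $\lambda_j\nu_j=0$. Combined with the previous display this forces $\lambda_j=0$ or $\lambda_j=\pm\sqrt{k-\mu}$. This is the substantive step, and your sketch does not contain it.

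\textbf{Missing case analysis.} You assert that all five values $\{\pm k_1,\pm\sqrt{k-\mu},0\}$ actually occur, even writing that the $0$-eigenvalue is present ``since the partial geometric design is not a genuine design'' --- but that is precisely part of what must be shown. The paper handles this explicitly: if the spectrum were $\{\pm k_1,0\}$ then $\Gamma_C$ would be complete bipartite, which forces $\mu\in\{k-1,k\}$; if it were $\{\pm k_1,\pm\sqrt{k-\mu}\}$ then $\Gamma_C$ would be a distance-regular circulant, and the Miklavi\v{c}--Poto\v{c}nik classification of those again forces $\mu=k-1$. Both contradict $\mu<k-1$. Your proposal needs an argument of this kind.

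\textbf{Comparison with the paper.} The paper bypasses the group-ring bookkeeping entirely: it uses the equitable partition $\{C,G\setminus C\}$ to get $(k_1-k_2)^2=k-\mu$, and then Babai's theorem together with the dihedral character table. Each degree-$2$ irreducible character $\psi_j$ contributes two eigenvalues of $\Gamma$ summing to $2\lambda_j$; since those eigenvalues lie in $\{\pm\sqrt{k-\mu}\}$, one gets $\lambda_j\in\{0,\pm\sqrt{k-\mu}\}$ directly. Your approach, once patched as above, is essentially the same computation pushed into $\mathbb{Z}[C]$ rather than read off from the representation theory of $G$; it is not wrong, but it is longer and the coset bookkeeping you flag as ``the main obstacle'' is exactly what the paper's route avoids.
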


\begin{proof}
The normal subgroup $C$ gives rise to an equitable partition with two parts for this graph. Let $k_1=|S \cap C|$, $k_2=k-k_1$, then this partition has quotient matrix
$$\begin{bmatrix}
    k_1 & k_2 \\
    k_2 & k_1
  \end{bmatrix}.$$
Note that $k_1 \neq 0$ because $C \neq H$ and $k_2 \neq 0$ since $\Gamma$ is connected. Furthermore, $(k_1-k_2)^{2}=k-\mu$ since the eigenvalues $k=k_1+k_2$ and $k_1-k_2$ of the quotient matrix are also eigenvalues of the Cayley graph $\Cay(G,S)$  \cite[Lemma~2.3.1]{BH}. It then also follows that $4k_1k_2=k(k-1)+\mu=n\mu$.
Next, we will use that the eigenvalues of Cayley graphs can be expressed as character sums of the underlying group by a theorem of Babai \cite[Thm.~3.1]{Bab} (see also the survey paper \cite{LZ}), and use the relation between character sums of $G$ and those of $C$.

The eigenvalues of the circulant Cayley graph $\Gamma_C$ are $$\lambda_{j}=\sum_{c \in S \cap C }\omega^{jc},$$ where $j=0,1,\ldots,n-1$ and $\omega=e^{\frac{2 \pi i}{n}}$ (for convenience, we consider $S \cap C$ as a subset of $\mathbb{Z}_n$).
Note that $\lambda_0=k_1$, and in addition, it is easy to see that $\lambda_{m+j}=-\lambda_{j}$ for every $j=0,1,\ldots,m-1$.
On the other hand, the irreducible characters of the dihedral group $G$ have degree $1$ or $2$. The eigenvalues $\pm k$ (both have multiplicity $1$) of $\Gamma$ correspond to irreducible characters of degree $1$. Thus, all other irreducible characters must correspond to eigenvalues $\pm \sqrt{k-\mu}$ (and indeed, it is easy to see that also $k_1-k_2$ corresponds to an irreducible character of degree $1$).

Each of the $m-1$ irreducible characters $\psi_j$ of degree $2$ corresponds to two eigenvalues $\lambda_{j1}$ and $\lambda_{j2}$ of $\Gamma$, and it follows from Babai's theorem, the character values, and the fact that $S$ is inverse-closed that
$$\lambda_{j1}+\lambda_{j2}=\sum_{s \in S}\psi_{j}(s)=\sum_{c \in S \cap C} (\omega^{jc}+\omega^{-jc})=2\sum_{c \in S \cap C} \omega^{jc}=2 \lambda_j,$$
for $j=1,\dots,m-1$.

Next, we use that $\lambda_{j1}$ and $\lambda_{j2}$ can only take values $\pm \sqrt{k-\mu}$, so $\lambda_j$ can only take values $\pm \sqrt{k-\mu}$ and $0$.
This implies that the bipartite Cayley graph $\Gamma_C$ has (distinct) eigenvalues $\{\pm k_1,0\}$, $\{\pm k_1,\pm \sqrt{k-\mu}\}$, or $\{\pm k_1,\pm \sqrt{k-\mu},0\}$.

If it has eigenvalues $\{\pm k_1,0\}$, then it is a complete bipartite graph. In this case $4mk_2=4k_1k_2=n\mu=2m\mu$ and therefore $\mu=2k_2$. On the other hand, we have $k(k-1)=(n-1)\mu$. Hence $(m+k_2)(m+k_2-1)=(2m-1)2k_2$ and therefore $k_2=m-1$ or $k_2=m$, which implies that $\Gamma$ is a complete bipartite graph minus a perfect matching or a complete bipartite graph, which is a contradiction.

If $\Gamma_C$ has eigenvalues $\{\pm k_1,\pm \sqrt{k-\mu}\}$, then it must be distance-regular \cite[Prop.~15.1.3]{BH}. On the other hand, distance-regular circulant graphs have been classified \cite{MP1} and therefore $\Gamma_C$ must be a complete bipartite graph minus a perfect matching. This implies (from the eigenvalues) that $\mu=k-1$, and so $\Gamma$ is also a complete bipartite graph minus a perfect matching, which again is a contradiction.

Thus, $\Gamma_C$ has eigenvalues $\{\pm k_1,\pm \sqrt{k-\mu},0\}$ and so it is the incidence graph of a partial geometric design with parameters $(m,k_1,\alpha,\beta)$, where $\alpha$ and $\beta$ are as stated
(see \cite[\S3.1]{VS} and Section \ref{PGDS}), which completes the proof.
\end{proof}

As a side remark we note that that circulant distance-regular graphs have been classified \cite{MP1} (and the only non-trivial examples are Paley graphs), and hence it follows that the induced graphs on $C$ cannot be distance-regular.

\subsection{The dicyclic group}

Let $G$ be the dicyclic group $Q_{4m}=\seq{a,b \mid a^{2m}=e,a^{m}=b^{2},b^{-1}ab=a^{-1}}$, and let $n=2m$. If $m$ is odd, then the group $G$ has a unique subgroup of index $2$, the cyclic group $H=\seq{a}$ \cite[Thm.~3]{N}. In this case, we claim that there is no non-trivial bipartite distance-regular Cayley graph with diameter $3$. Indeed, suppose that $ba^{i} \in S$ for some $i$. Then also $b^{-1}a^{i} =(ba^{i})^{-1}\in S$. But this implies that the two vertices $e$ and $b^2$ have the same neighbours, which is a contradiction.
Therefore we can conclude the following.
\begin{prop} Let $m$ be odd.
Then there is no non-trivial bipartite distance-regular Cayley graph with diameter $3$ on the dicyclic groups $Q_{4m}$.
\end{prop}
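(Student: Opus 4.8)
The plan is to show that if $m$ is odd, then any inverse-closed connection set $S\subseteq G\setminus\{e\}$ forces two distinct vertices of $\Cay(Q_{4m},S)$ to have identical neighbourhoods, which is incompatible with the graph being a non-trivial bipartite distance-regular graph of diameter $3$. First I would recall that $Q_{4m}$ has the unique index-$2$ subgroup $H=\seq{a}$ when $m$ is odd (cited above from \cite[Thm.~3]{N}), so by Lemma \ref{Normal subgroup} the part of the bipartite graph containing $e$ must be $H$ and $S\subseteq G\setminus H=\{ba^{i}\mid 0\le i<2m\}$. In particular every element of $S$ has the form $ba^{i}$.

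Next I would exploit the relation $b^{-1}=ba^{-m}$, which holds because $b^{2}=a^{m}$ so $b^{-1}=ba^{-m}\cdot b^{2}b^{-2}=ba^{-m}$; more directly $b^{-1}=b^{-1}b^{2}b^{-2}=bb^{2}b^{-2}\cdot a^{-m}\cdots$, the cleanest route being $b^{2}=a^{m}\Rightarrow b^{-1}=ba^{-m}$. Hence for any $ba^{i}\in S$ we have $(ba^{i})^{-1}=a^{-i}b^{-1}=a^{-i}ba^{-m}=ba^{i}a^{-m}=ba^{i-m}$, using $a^{-i}b=ba^{i}$ from $b^{-1}ab=a^{-1}$. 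Thus $S=S^{-1}$ means precisely that $ba^{i}\in S\iff ba^{i-m}\in S$. Now the key observation: $e$ is adjacent to $ba^{i}$ iff $ba^{i}\in S$, and $b^{2}=a^{m}$ is adjacent to $ba^{i}$ iff $ba^{i}(b^{2})^{-1}=ba^{i}a^{-m}=ba^{i-m}\in S$. By the symmetry just derived these two conditions are equivalent for every $i$, so $e$ and $a^{m}=b^{2}$ have exactly the same set of neighbours.

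Finally I would note $e\ne a^{m}$ (since $m$ is odd, $a^{m}\ne e$ as $a$ has order $2m$) and that in a non-trivial bipartite distance-regular graph with diameter $3$ two distinct vertices cannot have identical neighbourhoods: distinct vertices in the same part are at distance $2$ and have exactly $\mu<k$ common neighbours (non-trivial means $\mu<k-1<k$, so in particular they do not share all $k$ neighbours), while vertices in opposite parts have a neighbour of one that is non-adjacent to the other. This contradiction shows no such $S$ exists, proving the proposition. The statement as written then follows, and I would remark in passing that the same argument in fact rules out $\Cay(Q_{4m},S)$ being any connected bipartite Cayley graph with $S\subseteq G\setminus H$ nonempty, for odd $m$ — though diameter $3$ is all that is claimed. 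The main (very minor) obstacle is bookkeeping with the non-abelian relations in $Q_{4m}$, in particular being careful that $a^{-i}b=ba^{i}$ and $b^{-1}=ba^{-m}$; once these are pinned down the rest is immediate.
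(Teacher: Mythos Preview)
Your proof is correct and follows essentially the same route as the paper's: use the uniqueness of the index-$2$ subgroup $H=\seq{a}$ for odd $m$, observe that $S=S^{-1}$ forces $ba^{i}\in S\iff ba^{i-m}\in S$, and conclude that $e$ and $b^{2}=a^{m}$ have identical neighbourhoods, which is impossible in a non-trivial bipartite distance-regular graph of diameter $3$. The paper's argument is more terse (it writes $(ba^{i})^{-1}=b^{-1}a^{i}$ and jumps straight to the equal-neighbourhood conclusion), but the content is the same.
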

For $m$ even, the group $Q_{4m}$ has three subgroups of index $2$, and these are cyclic or dicyclic \cite[Thm.~3]{N}. If $H$ is cyclic, then the same argument as above implies that there is no non-trivial bipartite distance-regular Cayley graph with diameter $3$ on the group $G$. If $H$ is dicyclic, then a similar argument as in the proof of Proposition \ref{dihedral} applies. Therefore we can conclude the following.
\begin{prop} \label{dicyclic}
Let $\Gamma=\Cay(G,S)$ be a distance-regular Cayley graph with intersection array $\{k,k-1,k-\mu ; 1,\mu,k\}$, with $\mu<k-1$,
 such that $G$ is the dicyclic group $Q_{4m}$ with cyclic subgroup $C$ of order $n=2m$. Let $H$ be the part of this bipartite graph which contains the identity element. If $H$ is not the cyclic group, then the Cayley graph $\Gamma_C=\Cay(C,S \cap C)$ is the incidence graph of a partial geometric design with parameters $(m,k_1,\alpha,\beta)$ and distinct eigenvalues $\{\pm k_1,\pm \sqrt{k-\mu},0\}$, where $k_1=|S \cap C|$, $k_2=k-k_1$, $\alpha=\frac{\mu(2k_1-k_2)}{2}$, and $2k_1-1+\beta-\alpha=(k_1-k_2)^{2}=k-\mu$.
\end{prop}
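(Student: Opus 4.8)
The plan is to transcribe, almost line for line, the proof of Proposition \ref{dihedral}: the dicyclic group $Q_{4m}$ shares with the dihedral group every structural feature used there, so the only real work is to check that the character theory of $Q_{4m}$ behaves in the same way. Note first that $m$ must be even, for otherwise $Q_{4m}$ has a unique subgroup of index $2$ (the cyclic one), which would force $H=C$; so $H$ is one of the two dicyclic index-$2$ subgroups $\langle a^{2},b\rangle$, $\langle a^{2},ab\rangle$, and in particular $[G,G]=\langle a^{2}\rangle\subseteq H$.

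The normal subgroup $C$ of index $2$ gives an equitable partition of $\Gamma$ into its two cosets; since $S=S^{-1}$ and $G\setminus C$ is a single coset of $C$, its quotient matrix is $\left[\begin{smallmatrix}k_1 & k_2 \\ k_2 & k_1\end{smallmatrix}\right]$ with $k_1=|S\cap C|$ and $k_2=k-k_1$. Here $k_1\neq0$, for otherwise $\Gamma$ would be bipartite with parts $C$ and $G\setminus C$, forcing $H=C$; and $k_2\neq0$ because $\Gamma$ is connected. The eigenvalues $k=k_1+k_2$ and $k_1-k_2$ of the quotient matrix are eigenvalues of $\Gamma$ \cite[Lemma~2.3.1]{BH}, so, as the spectrum of $\Gamma$ is $\{k^{1},\sqrt{k-\mu}^{[n-1]},-\sqrt{k-\mu}^{[n-1]},-k^{1}\}$ and $k_1-k_2\notin\{\pm k\}$, we get $(k_1-k_2)^{2}=k-\mu$ and hence $4k_1k_2=k^{2}-(k-\mu)=k(k-1)+\mu=n\mu$.

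Next I would study the circulant $\Gamma_C=\Cay(\mathbb{Z}_n,S\cap C)$, identifying $C$ with $\mathbb{Z}_n$ ($n=2m$); its eigenvalues are $\lambda_j=\sum_{c\in S\cap C}\omega^{jc}$ with $\omega=e^{2\pi i/n}$, so $\lambda_0=k_1$. Because $\Gamma_C$ is an induced subgraph of the bipartite graph $\Gamma$ with halves $C\cap H$ (the even residues, which form the index-$2$ subgroup of $C$) and $C\setminus H$, the set $S\cap C$ consists of odd residues, and since $\omega^{m}=-1$ this forces $\lambda_{j+m}=-\lambda_j$. Now I would invoke Babai's theorem: the irreducible characters of $Q_{4m}$ have degree $1$ or $2$; the eigenvalues $\pm k$ of $\Gamma$, each of multiplicity $1$, are realised by linear characters (the trivial one and the one with kernel $H\supseteq[G,G]$), so no degree-$2$ character contributes $\pm k$; and each degree-$2$ character $\psi_j$ restricts to $C$ as $\chi_j\oplus\chi_{-j}$ (with $\chi_j(a)=\omega^{j}$) and vanishes on $G\setminus C$, whence $\sum_{s\in S}\psi_j(s)=\sum_{c\in S\cap C}(\omega^{jc}+\omega^{-jc})=2\lambda_j$, using that $S\cap C$ is inverse-closed in $C$. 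The two eigenvalues of $\Gamma$ contributed by $\psi_j$ have multiplicity at least $2$, hence lie in $\{\pm\sqrt{k-\mu}\}$, and they sum to $2\lambda_j$; thus $\lambda_j\in\{\sqrt{k-\mu},0,-\sqrt{k-\mu}\}$ for $j\neq0,m$, and all eigenvalues of $\Gamma_C$ lie in $\{\pm k_1,\pm\sqrt{k-\mu},0\}$.

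As $\Gamma_C$ is bipartite with $k_1\neq0$, its distinct eigenvalues form one of $\{\pm k_1\}$, $\{\pm k_1,0\}$, $\{\pm k_1,\pm\sqrt{k-\mu}\}$, $\{\pm k_1,\pm\sqrt{k-\mu},0\}$; the first is impossible since $S\cap C$ is inverse-closed and has no involution ($m$ being even, the involution of $\mathbb{Z}_n$ is an even residue), so $k_1$ is even. The two intermediate cases are excluded just as in the proof of Proposition \ref{dihedral}: using $4k_1k_2=n\mu$, $k(k-1)=(n-1)\mu$, $\mu<k-1$, the fact that a connected regular bipartite graph with four distinct eigenvalues is distance-regular \cite[Prop.~15.1.3]{BH}, and the classification of distance-regular circulants \cite{MP1}, one finds that a $\Gamma_C$ with at most four distinct eigenvalues would be a disjoint union of complete bipartite graphs or of complete bipartite graphs each minus a perfect matching, which forces $\Gamma$ itself to be $K_{n,n}$ or $K_{n,n}$ minus a perfect matching --- a contradiction (the disconnected possibilities, which the dihedral proof treats tacitly, are handled in the same way, each component being one of these graphs). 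Hence $\Gamma_C$ has the five distinct eigenvalues $\{\pm k_1,\pm\sqrt{k-\mu},0\}$, so by \cite{VS} (cf.\ Section \ref{PGDS}) it is the incidence graph of a partial geometric design with parameters $(m,k_1,\alpha,\beta)$, and the identities $\alpha=\mu(2k_1-k_2)/2$ and $2k_1-1+\beta-\alpha=(k_1-k_2)^{2}=k-\mu$ follow from the standard relations among the parameters and eigenvalues of such a design, exactly as in Proposition \ref{dihedral}. The step I expect to require the most care is the character-theoretic one: one must confirm that $Q_{4m}$ has exactly the requisite linear characters (so that $\pm k$ are accounted for without using any degree-$2$ character) and that each degree-$2$ character restricts to $C$ as a sum of two conjugate characters faithful on the cyclic part and vanishes off $C$; once this is in place, the whole argument is a transcription of the dihedral one.
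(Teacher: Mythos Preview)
Your proposal is correct and follows exactly the route the paper intends: the paper's own proof of this proposition is the single sentence ``a similar argument as in the proof of Proposition~\ref{dihedral} applies,'' and you have carried out precisely that transcription, including the one place where a genuine check is required---that the degree-$2$ irreducible characters of $Q_{4m}$ vanish on $G\setminus C$ and restrict to $C$ as $\chi_j\oplus\chi_{-j}$, so that Babai's formula again yields $\lambda_{j1}+\lambda_{j2}=2\lambda_j$. Your handling of the degenerate two-eigenvalue case $\{\pm k_1\}$ is a little elliptical (the implicit step is that such a spectrum forces a perfect matching, hence $k_1=1$, contradicting $k_1$ even), but this case is so trivial that the paper does not even list it in the dihedral proof.
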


We also note that if $m$ is even and $H$ is a dicyclic group, then the only involution of the group $G=Q_{4m}$ is in $H$. So $S$ contains no involutions, which implies that the Cayley graph $\Cay(G,S)$ is {\em not} on the semidirect product of a group --- the part of this bipartite Cayley graph which contains the identity element --- and $\mathbb{Z}_{2}$.

\subsection{The semidihedral group}

Let $m=2^{\ell-1}$, with $\ell > 1$. Let $G$ be the semidihedral group $SD_{4m}=\seq{a,b \mid a^{2m}=b^{2}=e,bab=a^{m-1}}$. Then the group $G$ has three subgroups of index $2$, the cyclic group, the dihedral group $D_{2m}$, and the dicyclic group $Q_{2m}$ \cite[Thm.~3]{N}. If $H$ is cyclic or dicyclic, then there exist involutions outside the normal subgroup $H$ and therefore the group $G$ is the semidirect product of $H$ with $\mathbb{Z}_{2}$.

The case that $H$ is a dihedral group is related to an old problem about the (non)-existence of non-trivial difference sets in dihedral groups. It is proved in \cite{L} that if an $(n,k,\mu)$-difference set in a dihedral group of order $n$ exists, then $k-\mu$ must be odd. Thus, Lemma \ref{oddparameters} can be applied, and we obtain the following.

\begin{prop}
Every bipartite distance-regular Cayley graph with diameter $3$ on a semidihedral group is on the semidirect product of a group --- the part of this bipartite Cayley graph which contains the identity element --- and $\mathbb{Z}_{2}$.
\end{prop}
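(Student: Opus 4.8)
The plan is to combine the case analysis of subgroups of index $2$ in the semidihedral group $SD_{4m}$ (which are the cyclic group, the dihedral group $D_{2m}$, and the dicyclic group $Q_{2m}$, by \cite[Thm.~3]{N}) with two facts already available to us: Lemma \ref{oddparameters}, which guarantees that $\Gamma$ can be realized on $H \rtimes \mathbb{Z}_2$ whenever $n$ or $k$ is odd; and the result of \cite{L} that an $(n,k,\mu)$-difference set in a dihedral group of order $n$ can only exist if $k-\mu$ is odd. First I would handle the trivial case where $\Gamma$ is a complete bipartite graph minus a perfect matching (i.e.\ $\mu = k-1$); by the remarks in Section \ref{Sec:drCgandDS} such a graph is a Cayley graph on a dihedral group, so the conclusion holds directly. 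For the remainder assume $\mu < k-1$, i.e.\ $\Gamma$ is non-trivial.

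Next I would split according to which index-$2$ subgroup $H$ is the part containing the identity element. If $H$ is the cyclic group $C_{2m}$ or the dicyclic group $Q_{2m}$, then --- as observed just before the statement --- the group $G = SD_{4m}$ contains an involution lying outside $H$ (concretely, $b$ for the cyclic case and $ba^{i}$ for a suitable $i$ in the dicyclic case; these are the involutions of $SD_{4m}$ not contained in the chosen $H$). Taking $a$ to be such an involution, we get $G = H \rtimes \seq{a}$, so in particular $\Gamma = \Cay(G,S)$ is already realized as a Cayley graph on a semidirect product of $H$ and $\mathbb{Z}_2$, and there is nothing further to prove.

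The one remaining case is $H \cong D_{2m}$, which is exactly the situation not immediately covered by an involution outside $H$. Here I would invoke Proposition \ref{Bipartite distance-regular and difference set}: since $\Gamma = \Cay(G,S)$ is a non-trivial bipartite distance-regular Cayley graph with diameter $3$ on $G$ and $H$ is the part containing $e$, for any $a \in G \setminus H$ the set $D = Sa^{-1}$ is a non-trivial $(n,k,\mu)$-difference set in $H = D_{2m}$ (where $n = 2m$). By \cite{L}, the existence of such a difference set in a dihedral group forces $k - \mu$ to be odd. Since the spectrum of $\Gamma$ contains $\pm\sqrt{k-\mu}$ with multiplicity $n-1 = 2m-1 > 0$, the graph has an eigenvalue $\sqrt{k-\mu}$, and $k-\mu$ odd means $k$ and $\mu$ have opposite parities, so at least one of $k$, $\mu$ is odd; in particular $k$ is odd or $\mu$ is odd. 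Actually I only need $k$ odd: if $\mu$ is odd then, from $k(k-1) = (n-1)\mu$ with $n=2m$ even, we get $(n-1)\mu$ odd, forcing $k(k-1)$ odd, which is impossible; hence $k$ must be odd. Then Lemma \ref{oddparameters} applies and yields $G \cong H \rtimes \mathbb{Z}_2$, which is even stronger than needed.

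I do not expect a serious obstacle here; the main point requiring care is the parity bookkeeping in the dihedral case --- making sure the Lai result \cite{L} is applied to the correct difference set (using Proposition \ref{Bipartite distance-regular and difference set} to produce it) and correctly deducing that $k$ is odd rather than just that one of $k,\mu$ is odd, so that Lemma \ref{oddparameters} can be invoked cleanly. The only other thing to be careful about is to dispose of the trivial $\mu = k-1$ case at the outset, since Proposition \ref{Bipartite distance-regular and difference set} and the hypothesis of \cite{L} implicitly concern the non-trivial situation.
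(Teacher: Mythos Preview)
Your proof is correct and follows exactly the paper's approach: case-split on the index-$2$ subgroup $H$, use an involution outside $H$ for the cyclic and dicyclic cases, and for $H\cong D_{2m}$ invoke \cite{L} (via Proposition~\ref{Bipartite distance-regular and difference set}) to force $k-\mu$ odd and then apply Lemma~\ref{oddparameters}; your explicit parity check that $k$ rather than $\mu$ must be odd fills in a step the paper leaves tacit. One small point: your separate treatment of the trivial case $\mu=k-1$ via a dihedral realization does not literally yield the conclusion for the \emph{specific} $H$ in $SD_{4m}$, but this is harmless since then $k=n-1$ is odd (as $n=2m$ is even) and Lemma~\ref{oddparameters} already covers it directly.
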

We note that the character table of the semidihedral group is more complicated than the character tables of the dihedral group and the dicyclic group, so to extract a similar result as Propositions \ref{dihedral} and \ref{dicyclic} seems quite ambitious.

\subsection{Small cases}

We finish this section with some interesting small cases. The smallest $2$-$(n,k,\mu)$ designs with $n$ and $k$ even and $k<n-1$ are the $2$-$(16,6,2)$ designs, and their complementary $2$-$(16,10,6)$ designs. The next smallest ones are the $2$-$(64,28,12)$ designs.

There are three non-isomorphic $2$-$(16,6,2)$ designs and therefore three non-isomorphic distance-regular graphs with intersection array $\{6,5,4;1,2,6\}$ \cite[p.~222]{BCN}. It turns out that all three are Cayley graphs. One of them is the folded $6$-cube which is a Cayley graph on the elementary abelian $2$-group of order $32$ \cite[\S3.3]{VJ}. By using GAP \cite{G} and the difference sets in \cite[Table~18.77]{CD}, we can construct all three as follows.

\begin{itemize}
\item $G_{1}$ is the elementary abelian group of order $32$ with generators $\seq{a,b,c,d,f}$ of involutions, $H=\seq{a,b,c,d}$, and $S=\{af,bf,cf,df,f,abcdf\}$;
\item $G_{2}=\seq{a,b,c \mid a^{8}=b^{2}=c^{2}=e,ab=ba,cac=a^{-1},cbc=b^{-1}}$ is isomorphic to $\Dih(H)$, $H=\seq{a,b}$ is isomorphic to $\mathbb{Z}_{8} \times \mathbb{Z}_{2}$, and $S=\{c,ca,ca^{2},ca^{4},cab,ca^{6}b\}$;
\item $G_{3}=\seq{a,b,c,d \mid a^{4}=c^{2}=d^{2}=(da)^{2}=(da^{2})^{2}=e,a^{2}=b^{2},ac=ca,bc=cb,dabc=(db)^{-1}}$ is isomorphic to $SD_{16} \times \mathbb{Z}_{2}$, $H=\seq{a,b,c}$ is isomorphic to $Q_{8} \times \mathbb{Z}_{2}$ and
    $S=\{d,da^{2},(db)^{4}da, \break(db)^{6}da,db,dabc\}.$
\end{itemize}
This implies that every bipartite distance-regular graph with diameter $3$  on $32$ vertices is a Cayley graph on the semidirect product of a group --- the part of this bipartite Cayley graph which contains the identity element --- and $\mathbb{Z}_{2}$. It turns out that all three distance-regular graphs have the $4$-cube as an induced subgraph. This antipodal bipartite distance-regular graph with diameter $4$ can (once more) be constructed as a Cayley graph $\Cay(K,S \cap K)$ on a subgroup $K$ of index $2$ --- cf.~Propositions \ref{dihedral} and \ref{dicyclic} --- as described below (see also the structures of connected $4$-regular bipartite integral Cayley graphs in \cite[Table~3]{MW}). Recall also Proposition \ref{propRelDifSet}.
\begin{itemize}
\item $K=\seq{af,bf,cf,df}$ is an elementary abelian subgroup in $G_{1}$, and $S \cap K=\{af,bf,cf,df\}$;
\item $K=\seq{a^{2},b,c} \cong D_{8} \times \mathbb{Z}_{2}$ is a subgroup in $G_{2}$, and $S \cap K=\{c,ca^{2},ca^{4},ca^{6}b\}$;
\item $K=\seq{db,da} \cong SD_{16}$ is a subgroup in $G_{3}$, and $S \cap K=\{(db)^{4}da,(db)^{6}da,db,dabc\}$.
\end{itemize}

\section{Bipartite distance-regular Cayley graphs with larger diameter}\label{sec:largerdiameter}

We conclude this paper with some (mostly known) results on bipartite distance-regular Cayley graphs with larger diameter. We note that the list of known bipartite distance-regular graphs with diameter at least $5$ is quite limited, see \cite[\S 6.11]{BCN}.

The families of bipartite distance-regular graphs with unbounded diameter $d$ include the $2d$-cycle, the $d$-cube, and the folded $2d$-cube, which are clearly Cayley graphs. In an earlier paper \cite[Prop.~3.1]{VJ}, we showed that the Doubled Odd graphs are not Cayley graphs.

It was also shown that the Foster graph (with diameter $8$) is not a Cayley graph \cite[Prop.~4.2]{VJ}, nor are the incidence graphs of the known generalized hexagons (of order any prime power) \cite[Prop.~3.6]{VJ}. The latter are bipartite distance-regular graphs with diameter $6$.

Finally, we consider the (known) antipodal bipartite distance-regular graphs with diameter $5$. These are all bipartite doubles of triangle-free strongly regular graphs $\Gamma$. Because the halved graphs of such a bipartite double is isomorphic to the complement of $\Gamma$, it follows that the bipartite double of $\Gamma$ is a Cayley graph if and only if $\Gamma$ is a Cayley graph (cf.~\cite[\S3.4]{VJ}). The known examples of triangle-free strongly regular graphs that are well known not to be Cayley graphs are the Petersen graph and the Hoffman-Singleton graph \cite{Resmini}. We also checked that the Gewirtz graph and the $M_{22}$-graph on $77$ vertices are not Cayley graphs, because their automorphism groups do not have subgroups of order $n$ (the number of vertices). Besides the folded $5$-cube (whose bipartite double is the $5$-cube), the only (known) example in this class that is a Cayley graph is the Higman-Sims graph \cite{Ha, JK}.

\section*{Acknowledgements}

\noindent The research of Mojtaba Jazaeri was in part supported by a grant from School of Mathematics, Institute for Research in Fundamental Sciences (IPM) (No. 96050014). Mojtaba Jazaeri is grateful to the Research Council of Shahid Chamran University of Ahvaz for financial support (SCU.MM1400.29248). The authors thank Robert Bailey for \cite{drgorg} and a referee for some useful comments.

\end{document}